\newtheorem{theorem}{Theorem}[section]
\newtheorem{proposition}[theorem]{Proposition}
\newtheorem{lemma}[theorem]{Lemma}
\newtheorem{conjecture}[theorem]{Conjecture}
\numberwithin{equation}{section}
\theoremstyle{definition}
\newenvironment{example}
  {\pushQED{\qed}\examplex}
  {\popQED\endexamplex}
\theoremstyle{remark}
\newtheorem{remark}[theorem]{Remark}
\newtheorem{remarks}[theorem]{Remarks}
\newtheorem*{remark*}{Remark}
\newcommand{\1}[1]{{\mathbf 1}{\{#1\}}}
\newcommand{\Z}{\mathbb{Z}}
\newcommand{\ZP}{\mathbb{Z}_+}
\newcommand{\R}{\mathbb{R}}
\newcommand{\N}{\mathbb{N}}
\DeclareMathOperator{\E}{\mathbb{E}}
\renewcommand{\Pr}{{\mathbb P}}
\DeclareMathOperator{\Var}{\mathbb{V}ar}
\DeclareMathOperator*{\supp}{supp}
\newcommand{\tra}{{\scalebox{0.6}{$\top$}}}
\newcommand{\eps}{\varepsilon}
\newcommand{\ba}{\mathbf{a}}
\newcommand{\bb}{\mathbf{b}}
\newcommand{\be}{\mathbf{e}}
\newcommand{\bk}{\mathbf{k}}
\newcommand{\bs}{\mathbf{s}}
\newcommand{\bt}{\mathbf{t}}
\newcommand{\bu}{\mathbf{u}}
\newcommand{\bx}{\mathbf{x}}
\newcommand{\by}{\mathbf{y}}
\newcommand{\bz}{\mathbf{z}}
\newcommand{\bmu}{\boldsymbol{\mu}}
\newcommand{\0}{\mathbf{0}}
\newcommand{\vo}{\mathbf{1}}
\newcommand{\re}{{\mathrm{e}}}
\newcommand{\ud}{{\mathrm d}}
\newcommand{\as}{\text{ a.s.}}
\newcommand{\cH}{{\mathcal{H}}}
\newcommand{\cL}{{\mathcal{L}}}
\newcommand{\cN}{{\mathcal{N}}}
\newcommand{\cX}{{\mathcal{X}}}
\newcommand{\tod}{\stackrel{d}{\longrightarrow}}
\newcommand{\eqd}{\stackrel{d}{=}}
\def\namedlabel#1#2{\begingroup  
    (#2)%
    \def\@currentlabel{#2}%
    \phantomsection\label{#1}\endgroup
}
\title{On the centre of mass of a random walk}
\author{Chak Hei Lo \and Andrew R. Wade} 
\date{\today}
\begin{document}

\maketitle

\begin{abstract}
For a random walk $S_n$ on $\R^d$ we study the asymptotic behaviour of the 
associated centre of mass process $G_n = n^{-1} \sum_{i=1}^n S_i$. For lattice distributions we give conditions for a local limit theorem
to hold. We prove that if the increments of the walk have zero mean and finite second moment, $G_n$ is recurrent if $d=1$ and transient if $d \geq 2$. In the transient case we show that $G_n$ has 
a diffusive rate of escape.
These results extend work of Grill, who considered simple symmetric random walk.
We also give a class of  random walks with symmetric heavy-tailed increments for which $G_n$ is transient in $d=1$.
\end{abstract}

\smallskip
\noindent
{\em Keywords:} Random walk; centre of mass; barycentre; time-average; recurrence classification; local central limit theorem; rate of escape.

\smallskip
\noindent
{\em 2010 Mathematics Subject Classifications:} 60G50 (Primary) 60F05, 60J10 (Secondary).

\section{Introduction and main results}
\label{sec:results}

Let $d$ be a positive integer. Suppose that $X, X_1, X_2, \ldots$ is a sequence of i.i.d.~random variables on $\R^d$.
We consider the random walk $(S_n, n \in \ZP)$ in $\R^d$ defined by $S_0 := \0$ and $S_n :=\sum_{i=1}^{n}{X_i}$
($n \geq 1$). 
Our object of interest is the \emph{centre of mass process} $(G_n , n \in \ZP)$ corresponding to the random walk, defined by $G_0 := \0$ and $G_n := \frac{1}{n}\sum_{i=1}^{n}{S_i}$ ($n \ge 1$). The question of the asymptotic behaviour of $G_n$
was raised by P.~Erd\H os (see \cite{KG}).
We view vectors in $\R^d$ as column vectors throughout; $\0$ denotes the zero vector.
We write $\| \, \cdot \, \|$ for the Euclidean norm on $\R^d$.
Throughout we use the notation
\[ \bmu := \E X , ~~~ M := \E [ ( X - \bmu) (X - \bmu)^\tra ] \]
whenever the expectations exist; when defined, $M$ is a symmetric $d$ by $d$ matrix.

The strong law of large numbers for $S_n$ yields the following strong law for $G_n$, whose proof
can be found in Appendix~\ref{sec:appendix}.
\begin{proposition}
\label{prop:LLN}
Suppose that $\E \| X \| < \infty$. Then $n^{-1} G_n \to \frac{1}{2} \bmu$, a.s., as $n \to \infty$.
\end{proposition}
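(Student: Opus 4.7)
The plan is to reduce the statement to the classical strong law for $S_n$ combined with a Cesàro-type averaging argument. Writing
\[ n^{-1} G_n \;=\; \frac{1}{n^2}\sum_{i=1}^n S_i \;=\; \frac{1}{n^2}\sum_{i=1}^n i\cdot\frac{S_i}{i}, \]
the heuristic is immediate: by Kolmogorov's strong law of large numbers for i.i.d.\ integrable increments, $S_i/i \to \bmu$ a.s., so the weighted average above should behave like $\bmu \cdot n^{-2}\sum_{i=1}^n i = \bmu \cdot \tfrac{n+1}{2n} \to \tfrac{1}{2}\bmu$.

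To make this rigorous, I would proceed componentwise (the claim is a statement in $\R^d$ but reduces coordinate by coordinate to the scalar case). Fix $\eps > 0$ and work on the a.s.\ event that $S_i/i \to \bmu$. Choose $N = N(\omega)$ large enough that $\|S_i/i - \bmu\| < \eps$ for all $i \ge N$. Split
\[ \frac{1}{n^2}\sum_{i=1}^n S_i \;=\; \frac{1}{n^2}\sum_{i=1}^{N-1} S_i \;+\; \frac{1}{n^2}\sum_{i=N}^n i\cdot\frac{S_i}{i}. \]
The first sum is a fixed finite quantity divided by $n^2$, hence tends to $\0$. For the second sum, the triangle inequality bounds its distance from $\bmu \cdot n^{-2}\sum_{i=N}^n i$ by $\eps \cdot n^{-2}\sum_{i=N}^n i \le \eps/2 + o(1)$, while $\bmu \cdot n^{-2}\sum_{i=N}^n i \to \tfrac{1}{2}\bmu$ as $n \to \infty$. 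Since $\eps$ is arbitrary, this yields the claim.

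An even quicker route uses the Stolz--Ces\`aro theorem: set $a_n := \sum_{i=1}^n S_i \in \R^d$ and $b_n := n^2$. Then $b_n$ is strictly increasing and unbounded, and
\[ \frac{a_{n+1} - a_n}{b_{n+1} - b_n} \;=\; \frac{S_{n+1}}{2n+1} \;=\; \frac{n+1}{2n+1}\cdot\frac{S_{n+1}}{n+1} \;\longrightarrow\; \tfrac{1}{2}\bmu \quad \text{a.s.} \]
by the strong law applied to $S_{n+1}/(n+1)$, so Stolz--Ces\`aro gives $a_n/b_n \to \tfrac{1}{2}\bmu$ a.s., which is exactly $n^{-1} G_n \to \tfrac{1}{2}\bmu$. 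There is really no obstacle here; the only thing to be careful about is that the convergence $S_i/i \to \bmu$ holds on a set of full measure that may depend on $\omega$, so the truncation index $N$ must be allowed to be random, but this causes no difficulty since the error control is uniform in $n$ once $N$ is fixed.
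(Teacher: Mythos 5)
Your truncation-and-triangle-inequality argument is essentially the paper's own proof: both fix a random index past which $S_i/i$ (equivalently $S_i - i\bmu$) is $\eps$-close to its limit, discard the finite initial block, and use a Cesàro-type bound on the tail. The Stolz--Ces\`aro packaging at the end is a correct and tidy alternative wrapping of the same ingredients (applied componentwise to make sense in $\R^d$), but it is not a genuinely different route.
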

To go further we typically assume the following.
\begin{description}
\item
[\namedlabel{ass:basicd}{M}]
Suppose that $\E [ \| X \|^2 ] < \infty$ and $M$ is positive-definite. 
\end{description}
Note that
\begin{equation}
\label{eq:weighted-sum}
G_n = \sum_{i=1}^n \left( \frac{n-i+1}{n} \right) X_i .
\end{equation}
The representation~\eqref{eq:weighted-sum} leads via the
Lindeberg--Feller theorem for triangular arrays 
to the following central limit theorem; again, see Appendix~\ref{sec:appendix} for the proof.
We write `$\tod$' for convergence in distribution, and $\cN_d ( \mathbf{m} , \Sigma )$
for a $d$-dimensional normal random variable with mean $\mathbf{m}$ and covariance $\Sigma$. 
\begin{proposition}
\label{prop:CLT}
If~\eqref{ass:basicd} holds, then, as $n \to \infty$,
\[ n^{-1/2} \left( G_n - \frac{n}{2} \bmu \right) \tod \cN_d ( \0 , M/ 3 ) .\]
\end{proposition}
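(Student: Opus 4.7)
The plan is to apply the Lindeberg--Feller CLT to the triangular array of weighted summands provided by~\eqref{eq:weighted-sum}. Reducing to the one-dimensional case via the Cramér--Wold device, it suffices to show that for every $\theta \in \R^d$,
\[ \theta^\tra n^{-1/2} \Bigl( G_n - \tfrac{n}{2} \bmu \Bigr) \tod \cN \bigl( 0, \tfrac{1}{3} \theta^\tra M \theta \bigr) . \]
I would first rewrite the left-hand side, using~\eqref{eq:weighted-sum}, as
\[ \sum_{i=1}^n \xi_{n,i} + r_n , \qquad \xi_{n,i} := \frac{n-i+1}{n \sqrt{n}} \, \theta^\tra ( X_i - \bmu ) , \]
where $r_n$ is a deterministic remainder coming from replacing $\E G_n = \tfrac{n+1}{2}\bmu$ by $\tfrac{n}{2}\bmu$; a direct computation shows $r_n = O(n^{-1/2}) \to 0$, so this term is negligible.

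The random variables $\xi_{n,1}, \ldots, \xi_{n,n}$ are independent with mean zero and variance $\frac{(n-i+1)^2}{n^3} \, \theta^\tra M \theta$. Summing over $i$ and changing the index $j = n-i+1$, the total variance equals
\[ \frac{\theta^\tra M \theta}{n^3} \sum_{j=1}^n j^2 = \frac{(n+1)(2n+1)}{6n^2} \, \theta^\tra M \theta \; \longrightarrow \; \tfrac{1}{3} \, \theta^\tra M \theta , \]
as required by the first Lindeberg--Feller hypothesis.

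For the Lindeberg condition, write $Y_i := \theta^\tra (X_i - \bmu)$, so that $\E [Y_i^2] < \infty$ by assumption~\eqref{ass:basicd}. Since $|\xi_{n,i}| \le n^{-1/2} |Y_i|$, on the event $\{ |\xi_{n,i}| > \eps\}$ we also have $|Y_i| > \eps \sqrt{n}$, and hence
\[ \sum_{i=1}^n \E \bigl[ \xi_{n,i}^2 \1{|\xi_{n,i}|>\eps} \bigr] \le \frac{1}{n} \sum_{i=1}^n \E \bigl[ Y_1^2 \1{|Y_1| > \eps \sqrt{n}} \bigr] = \E \bigl[ Y_1^2 \1{|Y_1| > \eps \sqrt{n}} \bigr] , \]
which tends to zero by dominated convergence. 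Combining the two displays with the negligibility of $r_n$ and invoking Lindeberg--Feller completes the proof, and Cramér--Wold then gives the claimed joint limit with covariance $M/3$.

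There is no genuine obstacle here: the argument is entirely routine once one notices the key Riemann-sum identity $n^{-3} \sum_{j=1}^n j^2 \to 1/3$, which accounts for the $M/3$ in the limiting covariance. The only mild care needed is the shift from the exact mean $\tfrac{n+1}{2}\bmu$ to the stated centring $\tfrac{n}{2}\bmu$, and the passage from one dimension to $d$ dimensions, both handled above.
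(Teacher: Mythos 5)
Your proof is correct and follows essentially the same route as the paper's: Cramér--Wold reduction to one dimension, Lindeberg--Feller applied to the triangular array of weighted centred increments, the variance computation $n^{-3}\sum j^2 \to 1/3$, and a dominated-convergence argument for the Lindeberg condition. The only superficial difference is that the paper reindexes the weights $(n-i+1)/n$ to $i/n$ (using equality in distribution) before defining the array, whereas you work with the original weights directly; this is purely cosmetic.
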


Our first main result is a \emph{local} central limit theorem. We assume that $X$ has a non-degenerate $d$-dimensional lattice distribution.
Thus (see \cite[Ch.~5]{BR}) there is a unique minimal subgroup $L := H \Z^d$ of $\R^d$, where $H$ is a $d$ by $d$ matrix, 
such that $\Pr ( X \in \bb + L ) =1$ for some $\bb \in \R^d$,
with the property that if $\Pr ( X \in \bx + L' ) = 1$ for some closed subgroup $L'$ and $\bx \in \R^d$, then $L \subseteq L'$,
and with $h := | \det H | \in (0, \infty)$. In other words, we make the following assumption.
\begin{description}
\item
[\namedlabel{ass:basicd2}{L}]
Suppose that the minimal subgroup associated with $X$ is $L := H \Z^d$ with $h := | \det H | > 0$.
\end{description}
See Appendix~\ref{sec:char-fn} for background on lattice distributions.
Equivalent conditions to~\eqref{ass:basicd2}
can be formulated in terms of the characteristic function of $X$ or in terms of the maximality of $h$: see Lemma~\ref{lem:equivalent} below.
Note that there may be many matrices $H$ for which $H \Z^d$ is equal to (unique) $L$, but for all of these $| \det H|$ is the same.
Also note that symmetric simple random walk (SSRW) does not satisfy~\eqref{ass:basicd2} with the obvious choice $H = I$ (the identity), but does
satisfy~\eqref{ass:basicd2} for an $H$ with $h=2$, the maximal $| \det H|$ 
for which $\Pr ( X \in \bx + H \Z^d ) =1$ for some $\bx \in \R^d$: see Section~\ref{sec:examples} and Appendix~\ref{sec:char-fn} for details. 

Notice that $\Pr(X \in \bb +H \Z^d)=1$ implies $\Pr(S_n \in n\bb + H \Z^d)=1$, which shows that
 $\Pr(n^{-1/2} G_n \in \cL_n ) = 1$, where we define 
\[ \cL_n := \left\{ n^{-3/2} \left( \tfrac{1}{2}n(n+1)\bb + H \Z^d \right) \right\}. \]
For $\bx \in \R^d$, define
$p_n ( \bx )  := \Pr ( n^{-1/2} G_n= \bx )$,  and
\begin{align}
\label{eq:ndef}
\nu (\bx) & : = \frac{\exp \{- \tfrac{3}{2} \bx^\tra M^{-1}\bx  \} }{(2\pi)^{d/2} \sqrt{\det (M/3)}} ,
\end{align}
the density of $\cN_d ( \0, M/3)$.

Here is our local limit theorem.
\begin{theorem}
\label{thm:LCLT}
Suppose that~\eqref{ass:basicd2} and~\eqref{ass:basicd} hold. Then we have 
\begin{equation}
\lim_{n \to \infty} \sup_{\bx \in \cL_n} \left| \frac{n^{3d/ 2}}{h} p_n(\bx)- 
\nu \left(\bx - \frac{(n+1)}{2n^{1/2}} \bmu \right) \right| = 0. 
\label{eq:lclt}
\end{equation}
\end{theorem}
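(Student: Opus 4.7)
My plan is to apply the standard Fourier-inversion route for local limit theorems, adapted to the triangular-array structure visible in \eqref{eq:weighted-sum}. Since $n^{-1/2}G_n$ is supported on the shifted lattice $\cL_n$, whose fundamental cell has volume $h\,n^{-3d/2}$, Fourier inversion on the lattice yields
\[
\frac{n^{3d/2}}{h}\,p_n(\bx) \;=\; \frac{1}{(2\pi)^d}\int_{D_n}\phi_n(\bt)\,e^{-i\bt^{\tra}\bx}\,d\bt,\qquad \bx\in\cL_n,
\]
with $\phi_n(\bt) := \E\bigl[e^{i\bt^{\tra}G_n/\sqrt n}\bigr]$ and $D_n$ a fundamental domain for the dual lattice $n^{3/2}L^{*}$, where $L^{*}$ denotes the lattice dual to $L$. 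The Gaussian target $n(\bx-\mu_n)$, with $\mu_n := \tfrac{n+1}{2\sqrt n}\bmu$, admits the analogous representation over $\R^d$. Bounding $\sup_\bx|\cdot|$ by the $L^1$-norm of the difference of integrands, and noting that $\int_{\R^d\setminus D_n}e^{-\tfrac16\bt^{\tra}M\bt}\,d\bt\to 0$ as $D_n$ exhausts $\R^d$, it suffices to show
\[
\int_{D_n}\bigl|\phi_n(\bt) - e^{i\bt^{\tra}\mu_n - \tfrac{1}{6}\bt^{\tra}M\bt}\bigr|\,d\bt \;\longrightarrow\; 0.
\]

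Using the product form $\phi_n(\bt)=\prod_{k=1}^n\phi_X(k\bt/n^{3/2})$ coming from \eqref{eq:weighted-sum}, I would split $D_n$ into three regions. \emph{Region~I}, $|\bt|\leq A$ for a large fixed $A$: the expansion $\log\phi_X(\bs)=i\bs^{\tra}\bmu-\tfrac12\bs^{\tra}M\bs+o(|\bs|^2)$ valid under \eqref{ass:basicd}, together with $\sum_{k=1}^n k/n^{3/2}=(n+1)/(2\sqrt n)$ and $\sum_{k=1}^n k^2/n^3\to\tfrac13$, gives uniform convergence of $\phi_n(\bt)$ to $e^{i\bt^{\tra}\mu_n-\frac16\bt^{\tra}M\bt}$ on $|\bt|\leq A$, so the integral over this bounded region is $o(1)$ by bounded convergence. \emph{Region~II}, $A<|\bt|\leq\delta\sqrt n$: here each $k\bt/n^{3/2}$ stays in a small neighborhood of $\0$, and the quantitative Taylor bound $|\phi_X(\bs)|\leq e^{-c_0|\bs|^2}$ valid for $|\bs|\leq\delta_1$ yields $|\phi_n(\bt)|\leq e^{-c_1|\bt|^2}$; the resulting integral, along with the matching Gaussian tail, is small once $A$ is large. \emph{Region~III}, $\delta\sqrt n<|\bt|$ within $D_n$: invoke the lattice condition \eqref{ass:basicd2}, which makes $|\phi_X|$ an $L^{*}$-periodic continuous function equal to $1$ only on $L^{*}$, so by compactness $|\phi_X(\bs)|\leq 1-\eta$ whenever $d(\bs,L^{*})\geq\delta_2$.

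The hard part will be \emph{Region~III}. Unlike the classical LCLT for $S_n$, where $\phi_{S_n/\sqrt n}(\bt)=\phi_X(\bt/\sqrt n)^n$ reduces the large-$\bt$ bound to a single-factor estimate, here the arguments $k\bt/n^{3/2}$ vary with $k$ and their reductions modulo $L^{*}$ may cluster near the origin for several~$k$. The key observation is a consecutive-gap inequality: since $(k{+}1)\bt/n^{3/2}-k\bt/n^{3/2}=\bt/n^{3/2}$, the triangle inequality on $\R^d/L^{*}$ forces $\max\{d(k\bt/n^{3/2},L^{*}),d((k{+}1)\bt/n^{3/2},L^{*})\}\geq\tfrac12 d(\bt/n^{3/2},L^{*})$, so at least $\lfloor n/2\rfloor$ of the $n$ factors have their argument at distance $\geq\tfrac12\alpha$ from $L^{*}$, where $\alpha:=d(\bt/n^{3/2},L^{*})$. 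Combining this with the quadratic estimate $|\phi_X(\bs)|^2\leq 1-c_0\,d(\bs,L^{*})^2$ near $L^{*}$ for the large-$k$ factors, and with the small-argument Taylor bound for the first $K_0\sim\delta_1/\alpha$ factors (for which $k\bt/n^{3/2}$ has not yet escaped a fixed neighbourhood of $\0$), one obtains a decay estimate of the form $|\phi_n(\bt)|\leq\exp\bigl(-c\,\delta_1^3/\alpha - c'(n-K_0)\min(\alpha^2,1)\bigr)$. Optimising in $\alpha$ (worst case $\alpha\sim n^{-1/3}$, i.e.\ $|\bt|\sim n^{7/6}$) yields a uniform bound $|\phi_n(\bt)|\leq e^{-c n^{1/3}}$ on Region~III, which dominates its polynomial volume $O(n^{3d/2})$, so the corresponding integral is $o(1)$. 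Assembling the three regional estimates and sending $n\to\infty$ before $A\to\infty$ completes the proof.
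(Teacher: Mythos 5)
Your proposal follows the same Fourier-inversion framework as the paper (lattice inversion formula, comparison with the Gaussian characteristic function, decomposition of the frequency domain into near-origin/Gaussian-regime/far regions), and Regions I and II are essentially identical to the paper's $R_1$ and $R_2$. The interesting divergence is in your treatment of the large-$|\bt|$ region, where the two approaches are genuinely different, and both are correct.

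The paper splits the far region further into $R_3$ (where $|\bt|\le \pi\sqrt n$, so every argument $j\bt/n^{3/2}$ with $j\ge n/2$ lands uniformly bounded away from $S_H$) and $R_4$, and for $R_4$ it runs a geometric \emph{ball-counting} argument (credited to a lemma of Dobrushin--Hryniv): it bounds $K_n(\bt)$, the number of balls $B(\bx;\rho)$, $\bx\in S_H$, hit by the segment $L_n(\bt)$, and shows by a delicate case analysis in the step size $\nu$ that at least $\eps n$ of the $n$ sample points lie outside $S_H(\rho)$, yielding $|\Phi_n(\bt)|\le e^{-c\,n}$. You instead treat the whole far region at once via a \emph{consecutive-gap inequality}: since successive arguments differ by $\bt/n^{3/2}$, the torus triangle inequality forces at least one of any consecutive pair to lie at distance $\ge\tfrac12 d(\bt/n^{3/2},L^*)$ from $L^*$, and combining this with the Gaussian small-argument bound for the first $K_0\sim\delta_1/\alpha$ factors gives $|\phi_n(\bt)|\le e^{-c\,n^{1/3}}$ after optimising over $\alpha$. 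Your bound is quantitatively weaker than the paper's ($n^{1/3}$ versus $n$ in the exponent), but both comfortably dominate the polynomial volume of the region, so the argument closes; in exchange, your consecutive-gap observation is conceptually cleaner and avoids the paper's case-splitting in $\nu$.

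Two small points to tidy up for a fully rigorous write-up. First, the phrase ``$k\bt/n^{3/2}$ has not yet escaped a fixed neighbourhood of $\0$'' should really read ``of $L^*$'': what you actually use is that $d(k\bt/n^{3/2},L^*)=k\alpha$ for $k<K_0$ together with the $L^*$-periodicity of $|\varphi|$. (If you take $D_n$ to be the Voronoi cell of $n^{3/2}L^*$ about $\0$ then $\alpha$ coincides with $\|\bt/n^{3/2}\|$ and the two phrasings agree, but this should be stated.) Second, when $\alpha<\delta_1/n$ one has $K_0>n$, so the Gaussian contribution is $\exp\{-c\sum_{k=1}^n(k\alpha)^2\}\sim\exp\{-c n^3\alpha^2\}$ rather than $\exp\{-c\delta_1^3/\alpha\}$; since $\alpha\gtrsim\delta/n$ on Region III this still gives exponential decay, but it is a different formula from the one you wrote down, so the worst-case optimisation should distinguish this sub-range. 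Neither issue affects the validity of the approach.
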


\begin{remarks}
(i)
In the case $d=1$, versions of Theorem~\ref{thm:LCLT} are given in~\cite[Lemma~4.3]{MP}, in~\cite[Proposition~2.3]{CD},
and in~\cite[Lemma~1]{KG};
the latter result deals only with the special case of SSRW
and only bounds $p_n (\bx)$ up to constant factors. See Section~\ref{sec:examples} for a demonstration
that our assumptions are indeed satisfied by SSRW on $\Z^d$ for appropriate choice of $H$ with $h=2$.
The proof in \cite{MP} is a sketch, and the statement that ``it is enough to apply the usual analytical methods'' \cite[p.~515]{MP} does not quite tell the whole story, even in the one-dimensional case.
The papers~\cite{CD,KG,MP} also give bivariate local limit theorems for $(S_n, G_n)$ (in the case $d=1$). Related results can be found in \cite[Theorem~4.2]{DH} and~\cite{DKW}.

(ii)
If $Z_n := S_n - G_n$, then~\eqref{eq:weighted-sum} shows that $Z_{n+1} = \frac{n}{n+1} \sum_{i=1}^n (i/n) X_{i+1}$,
which implies that $Z_{n+1} \eqd \frac{n}{n+1} G_n$, where
 `$\eqd$' stands for equality in distribution.
 Thus Theorem~\ref{thm:LCLT} also yields
a local limit theorem for $Z_n$. However, the \emph{processes} $Z_n$ and $G_n$ may behave
very differently: see \cite[Remark 1.1]{bary}.
\end{remarks}

We turn to the almost-sure asymptotic behaviour of $G_n$.
First we have a recurrence result for $d=1$ that does not require the lattice assumption; in the case
of SSRW the fact that $G_n$ returns i.o.~(infinitely often) to a neighbourhood of the origin is due to Grill~\cite[Theorem~1]{KG}.
\begin{theorem}
\label{thm:classification}
Suppose that $d=1$ and that either of the following two conditions
holds.
\begin{itemize}
\item[(i)] Suppose that $\E | X  | \in (0, \infty)$ and $X \eqd - X$.
\item[(ii)] Suppose that~\eqref{ass:basicd} holds and that $\E X = 0$.
\end{itemize}
Then  $\liminf_{n \to \infty} G_n= -\infty$, $\limsup_{n \to \infty} G_n= +\infty$, and
 $\liminf_{n \to \infty} | G_n -x  | = 0$ for any $x \in \R$.
\end{theorem}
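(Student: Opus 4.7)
My plan is to reduce the theorem to three ingredients: (a) the increments satisfy $G_n - G_{n-1} \to 0$ a.s.; (b) $\limsup_n G_n = +\infty$ and $\liminf_n G_n = -\infty$ a.s.; and (c) (a)+(b) yield the density conclusion. In both hypotheses one has $\E X = 0$ and $\E|X| < \infty$ (symmetry in (i) forces zero mean). Then (a) is straightforward: writing $G_n - G_{n-1} = (S_n - G_{n-1})/n$, the SLLN gives $|S_n|/n \to 0$ a.s., while $|G_n| \leq \max_{k \leq n}|S_k| = o(n)$ a.s.\ (a standard consequence of $|S_n|/n \to 0$), so $|G_{n-1}|/n \to 0$ as well.

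The crux is (b). Fix $m \geq 1$; rearranging~\eqref{eq:weighted-sum} at index $m$ gives, for $n > m$,
\[
G_n \;=\; S_m \,-\, \frac{1}{n}\sum_{i=1}^m (i-1)X_i \,+\, \frac{n-m}{n}\,G'_{n-m},
\]
where $(G'_k)_k$ is the centre-of-mass process generated by $X_{m+1}, X_{m+2}, \ldots$, so $(G'_k) \eqd (G_k)$ as processes and is independent of $\cF_m := \sigma(X_1, \ldots, X_m)$. Sending $n \to \infty$ with $m$ fixed gives the identity (in $[-\infty, +\infty]$)
\[
\limsup_n G_n \;=\; S_m + \limsup_k G'_k .
\]
The event $\{\limsup_n G_n \in \R\}$ is invariant under finite changes of the $X_i$, so has probability $0$ or $1$ by Kolmogorov's zero-one law; if it were $1$, setting $L := \limsup_n G_n$ and $L' := \limsup_k G'_k$ gives $L = S_m + L'$ with $L \eqd L'$ and $L'$ independent of $S_m$, whence $\phi_L(t) = \phi_{S_m}(t)\phi_L(t)$ and, by continuity of $\phi_L$ at $0$, $\phi_{S_m} \equiv 1$ on a neighbourhood of $0$. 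This forces $S_m = 0$ a.s., contradicting the non-degeneracy of $X$ for $m = 1$. Hence $\limsup_n G_n \in \{-\infty, +\infty\}$ a.s., and a further zero-one argument gives $\Pr(\limsup_n G_n = +\infty) \in \{0, 1\}$. To rule out the value $0$ (equivalently, $G_n \to -\infty$ a.s.): in case (i) the symmetry $(G_n) \eqd (-G_n)$ would force $G_n \to +\infty$ too, which is absurd; in case (ii), Proposition~\ref{prop:CLT} gives $\Pr(G_n < -1) \to 1/2 \neq 1$. The analogous arguments deliver $\liminf_n G_n = -\infty$ a.s.

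For (c), fix $x \in \R$ and $\eps > 0$. By (a), eventually $|G_n - G_{n-1}| < \eps$, and by (b) the process $G_n$ visits both $(x + \eps, \infty)$ and $(-\infty, x - \eps)$ infinitely often, so it must lie in $(x - \eps, x + \eps)$ infinitely often. Hence $\liminf_n |G_n - x| \leq \eps$ a.s.; sending $\eps \downarrow 0$ completes the proof.
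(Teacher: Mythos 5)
Your proof is correct and reaches the same conclusion, but via a genuinely different route from the paper. The paper observes that $\{\limsup_n G_n \geq x\}$ is an \emph{exchangeable} event (Lemma~\ref{lem:exchangeable}) and invokes the Hewitt--Savage zero--one law to conclude that $\limsup_n G_n$ is an a.s.\ \emph{constant} $\ell \in [-\infty,\infty]$; it then uses the distributional identity $G'_n := \tfrac{n+1}{n}(G_{n+1}-X_1) \eqd G_n$ to deduce $\ell = \ell - X_1$ a.s., forcing $X_1 = 0$ a.s.\ if $\ell$ is finite, and packages the outcome into a clean four-way classification (Lemma~\ref{lem:4case}). You instead exploit the decomposition $\limsup_n G_n = S_m + \limsup_k G'_k$ to show that $\{\limsup_n G_n \in \R\}$ and $\{\limsup_n G_n = +\infty\}$ lie in the \emph{tail} $\sigma$-algebra, so Kolmogorov's zero--one law suffices; and you rule out the finite case by a characteristic-function factorisation $\phi_L = \phi_{S_m}\phi_L$ rather than by the constancy argument. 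Both of your zero--one applications are legitimate precisely because $S_m$ is a.s.\ finite, so the decomposition identity transfers the events to the tail. What the paper's route buys is that Hewitt--Savage immediately gives constancy of $\limsup_n G_n$, which makes the algebra $\ell = \ell - X_1$ one line; what your route buys is that it avoids the exchangeability check and uses only the more elementary Kolmogorov law, at the cost of the (routine) characteristic-function step. Your parts~(a) and~(c) (small increments plus oscillation implies density) coincide with the paper's Lemma~\ref{lem:change_sign}, and your use of Proposition~\ref{prop:CLT} in case~(ii) matches Lemma~\ref{lem:change_sign2}. One small remark: where you say ``a standard consequence of $|S_n|/n \to 0$'' for $\max_{k\le n}|S_k| = o(n)$, the paper gets the same via Proposition~\ref{prop:LLN} directly ($G_n/n \to 0$), which is marginally slicker; either is fine.
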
 

In contrast to Theorem~\ref{thm:classification}, we will show that in the case where
 $\E |X| = \infty$, $G_n$ may be transient. The condition we assume is as follows.

\begin{description}
\item
[\namedlabel{ass:stable}{S}]
Suppose that $X \eqd -X$ and $X$ is in the domain of normal attraction of a symmetric $\alpha$-stable  distribution with $\alpha \in (0,1)$.
\end{description}

\begin{theorem}
\label{thm:stable-transience}
Suppose that~$d=1$ and~\eqref{ass:basicd2} holds, i.e., $\Pr(X \in b + h \Z)=1$ for $b \in \R$ and $h >0$ maximal. Suppose also that~\eqref{ass:stable} holds. 
Then 
$\liminf_{n \to \infty} G_n= -\infty$, $\limsup_{n \to \infty} G_n= +\infty$, and
$\lim_{n \to \infty} | G_n | = \infty$.
\end{theorem}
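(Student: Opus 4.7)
The plan has two parts: (i) establish the transience $|G_n| \to \infty$ a.s., and (ii) upgrade this to the $\limsup/\liminf$ statements using the symmetry $X \eqd -X$. For (i), the strategy is Fourier anti-concentration followed by Borel--Cantelli. Set $\tilde G_n := n G_n = \sum_{j=1}^n (n-j+1) X_j$, which by~\eqref{ass:basicd2} is supported on a translate of $h\Z$ and has characteristic function $\varphi_{\tilde G_n}(t) = \prod_{j=1}^n \varphi_X(jt)$. Fourier inversion on $[-\pi/h, \pi/h]$ gives
\[
\sup_z \Pr(\tilde G_n = z) \leq \frac{h}{2\pi} \int_{-\pi/h}^{\pi/h} \Bigl|\prod_{j=1}^n \varphi_X(jt)\Bigr| \, dt,
\]
and since $G_n$ itself is supported on a lattice of spacing $h/n$, the interval $[-K,K]$ contains at most $1+2Kn/h$ possible values of $G_n$, whence $\Pr(|G_n|\leq K) \leq C_K\, n \int_{-\pi/h}^{\pi/h} |\varphi_{\tilde G_n}(t)|\, dt$. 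The key claim is that this Fourier integral is $O(n^{-1-1/\alpha})$, which gives $\Pr(|G_n|\leq K) = O(n^{-1/\alpha})$; since $\alpha<1$ makes $\sum_n n^{-1/\alpha}$ summable, Borel--Cantelli and a countable exhaustion in $K$ then yield $|G_n|\to\infty$ a.s.

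The Fourier estimate rests on two inputs. Normal attraction to a symmetric $\alpha$-stable law gives $|\varphi_X(t)| \leq e^{-c|t|^\alpha}$ for $|t|$ in a neighbourhood of $0$, while lattice maximality (Lemma~\ref{lem:equivalent}) combined with the $2\pi/h$-periodicity of $|\varphi_X|$ gives $|\varphi_X(t)| \leq 1 - \eta$ on any compact sub-interval of $(0, 2\pi/h)$. For $|t| \leq \delta$, comparing $\sum_{j=1}^n |jt|^\alpha$ to its integral ($\asymp n^{1+\alpha}|t|^\alpha$) and rescaling $s = t\, n^{1+1/\alpha}$ yields the target contribution $O(n^{-1-1/\alpha})$. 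For $|t| \in [\delta, \pi/h]$, an equidistribution argument shows that $jt \bmod 2\pi/h$ avoids a neighbourhood of $0$ for a positive fraction of $j \in \{1, \ldots, n\}$, giving $|\varphi_{\tilde G_n}(t)| \leq e^{-c'n}$ and a negligible contribution.

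For (ii), the events $A_\pm := \{G_n \to \pm\infty\}$ are invariant under permutations of finitely many $X_i$: a permutation $\sigma$ of $\{1, \ldots, N\}$ changes $G_n$ by at most $\sum_{i=1}^N |X_{\sigma(i)} - X_i|$, a quantity a.s.\ finite and bounded independently of $n$. Hewitt--Savage gives $\Pr(A_\pm) \in \{0,1\}$; the symmetry $X \eqd -X$ forces $\Pr(A_+) = \Pr(A_-)$; disjointness of $A_+$ and $A_-$ forces both to equal $0$. Combined with $|G_n|\to\infty$ a.s., this yields $\limsup_n G_n = +\infty$ and $\liminf_n G_n = -\infty$ a.s.

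The chief obstacle is the equidistribution estimate for the Fourier integral on $|t| \in [\delta, \pi/h]$: the factors $|\varphi_X(jt)|$ can be close to $1$ whenever $jt$ is near a nonzero multiple of $2\pi/h$, so one must quantify how many $j \leq n$ fall in the complementary region to extract the $e^{-c'n}$ decay. This step has no analogue in the finite-variance LCLT (Theorem~\ref{thm:LCLT}), where the Fourier tail away from the origin is handled more uniformly.
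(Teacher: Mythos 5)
Your plan has genuine merit as a streamlining: for the transience you do not need the full local limit theorem (Theorem~\ref{thm:SLCLT}), only the uniform anti-concentration bound $\sup_z \Pr(\tilde G_n = z) = O(n^{-1-1/\alpha})$, and your setup correctly reduces this to bounding the Fourier integral $\int_{-\pi/h}^{\pi/h} \prod_{j=1}^n |\varphi_X(jt)|\,\ud t$ by $O(n^{-1-1/\alpha})$. Your part (ii) (Hewitt--Savage plus symmetry to rule out $G_n \to \pm\infty$) is correct and essentially the argument the paper uses via Lemmas~\ref{lem:exchangeable} and~\ref{lem:4case}.

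However, your two-region split of the Fourier integral has a gap in the region $|t| \le \delta$. The bound $|\varphi_X(u)| \le \re^{-c|u|^\alpha}$, obtained from domain-of-attraction considerations as in~\eqref{domain}, holds only for $|u|$ in a fixed neighbourhood of the origin, say $|u| \le \delta_0$. When you apply it to each factor $\varphi_X(jt)$ with $j$ ranging up to $n$ and $|t|$ up to a fixed $\delta$, the arguments $jt$ reach magnitude $n\delta \gg \delta_0$. Worse, by the $2\pi/h$-periodicity of $|\varphi_X|$ (Lemma~\ref{lem:property}), whenever $jt$ lands near a nonzero multiple of $2\pi/h$ the factor $|\varphi_X(jt)|$ is close to $1$, so the intermediate inequality $\prod_j |\varphi_X(jt)| \le \re^{-c\sum_j |jt|^\alpha} \asymp \re^{-c' n^{1+\alpha}|t|^\alpha}$ is simply false for $|t|$ of size comparable to $1/n$ or larger: for $t = 2\pi/(nh)$ the factor with $j=n$ equals $1$ exactly, and many nearby factors are close to $1$. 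The correct domain of validity for your small-$t$ estimate is $|t| \lesssim 1/n$, not $|t| \le \delta$. What remains is the range $c/n \lesssim |t| \le \delta$, and there you must use the same counting/equidistribution machinery you propose only for $|t| \in [\delta, \pi/h]$. The paper's proof of Theorem~\ref{thm:SLCLT} handles exactly this via the regions $J_3$ (where the $n^{-3/2}j\bt$ stay in one fundamental domain, away from the origin, for $j \in [n/2, n]$) and $J_4$ (the genuine wrap-around regime requiring the point-counting bound). The fix to your proposal is to push the equidistribution argument down to $|t| \gtrsim 1/n$ (note that the spacing $\nu = |t|$ then satisfies $\nu \gtrsim 1/n$, which is the same lower bound~\eqref{nu-bounds} the paper uses, so the counting argument still applies), and to confine the stable estimate to $|t| \lesssim 1/n$; once you do that the integral bound $O(n^{-1-1/\alpha})$ follows and the rest of your argument goes through.
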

\begin{remark}
The transience here fails in the natural continuum version of this model. The analogous continuum model, a symmetric $\alpha$-stable L\'evy process for $\alpha \in (0,1)$, $s_t$,
has centre of mass $g_t = \frac{1}{t} \int_0^t s_u \ud u$, and it is surely true that $g_t$ again changes sign i.o., but in this case continuity of $g_t$
implies that $g_t =0$ i.o.
\end{remark}

We have the following transience result in dimensions greater than one.
In particular, Theorem~\ref{thm:classification2} says that $\lim_{n \to \infty} \|G_n \| = +\infty$, a.s., and gives a diffusive rate of escape;
in the case
of SSRW the result is due to Grill~\cite[Theorem~1]{KG}. 

\begin{theorem}
\label{thm:classification2}
Suppose that $d \geq 2$ and that~\eqref{ass:basicd2} and~\eqref{ass:basicd} hold, and that $\bmu = \0$. Then 
\[ \lim_{n \to \infty} \frac{ \log \| G_n \|}{\log n} = \frac{1}{2}, \as \]
\end{theorem}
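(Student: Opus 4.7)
The strategy is to prove matching inequalities $\limsup_n \log \|G_n\|/\log n \le 1/2$ and $\liminf_n \log \|G_n\|/\log n \ge 1/2$ almost surely.

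For the upper bound, I would use the decomposition $n G_n = (n+1) S_n - T_n$, where $T_n := \sum_{j=1}^n j X_j$; both $S_n$ and $T_n$ are mean-zero martingales (as $\bmu = \0$), with $\E \|S_n\|^2 = O(n)$ and $\E \|T_n\|^2 = O(n^3)$. Doob's $L^2$-maximal inequality along the dyadic subsequence $N_j := 2^j$ yields, for any $\epsilon' > 0$,
\[ \Pr \Bigl( \max_{n \le N_{j+1}} \|S_n\| > N_{j+1}^{1/2+\epsilon'} \Bigr) = O(2^{-2 j \epsilon'}), \qquad \Pr \Bigl( \max_{n \le N_{j+1}} \|T_n\| > N_{j+1}^{3/2+\epsilon'} \Bigr) = O(2^{-2 j \epsilon'}), \]
both summable in $j$. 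Borel--Cantelli then gives $\|S_n\| = O(n^{1/2+\epsilon'})$ and $\|T_n\| = O(n^{3/2+\epsilon'})$ uniformly in $n$, eventually a.s., so $\|G_n\| \le (1 + 1/n)\|S_n\| + \|T_n\|/n = O(n^{1/2+\epsilon'})$; sending $\epsilon' \downarrow 0$ completes the upper bound.

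For the lower bound, fix $\epsilon \in (0, 1/2)$. Theorem \ref{thm:LCLT} with $\bmu = \0$ gives $\Pr(G_n = \by) \le C h/n^{3d/2}$ uniformly in $\by$ on the admissible lattice $\Lambda_n := n^{-1}(\tfrac{n(n+1)}{2}\bb + H \Z^d)$, which has co-volume $h/n^d$ and hence $O(r^d n^d/h)$ sites in $B(\0, r)$; summing,
\[ \Pr(\|G_n\| \le r) \le C (r/\sqrt n)^d \qquad (0 < r \le \sqrt n), \]
so $\Pr(\|G_n\| \le n^{1/2-\epsilon}) \le C n^{-d\epsilon}$. Pick any $\alpha \in (\epsilon, d\epsilon)$, which is non-empty since $d \ge 2$, and set $m_k := \lceil k^{1/\alpha} \rceil$. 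Then $\sum_k \Pr(\|G_{m_k}\| \le m_k^{1/2-\epsilon}) \le C \sum_k k^{-d\epsilon/\alpha} < \infty$, so Borel--Cantelli yields $\|G_{m_k}\| > m_k^{1/2-\epsilon}$ eventually. For $n \in [m_k, m_{k+1}]$, the identity
\[ G_n - G_{m_k} = \frac{m_k - n}{n} G_{m_k} + \frac{1}{n} \sum_{i = m_k + 1}^n S_i, \]
combined with $m_{k+1} - m_k = O(m_k^{1-\alpha})$ and the upper-bound estimates $\|G_{m_k}\|, \max_{i \le m_{k+1}} \|S_i\| = O(m_k^{1/2+\epsilon'})$, gives
\[ \|G_n - G_{m_k}\| = O\bigl(m_k^{1/2 - \alpha + \epsilon'}\bigr). \]
Choosing $\epsilon' \in (0, \alpha - \epsilon)$ (possible since $\alpha > \epsilon$) makes this $o(m_k^{1/2 - \epsilon})$, so $\|G_n\| \ge \tfrac{1}{2} \|G_{m_k}\| \ge \tfrac{1}{4} n^{1/2-\epsilon}$ for all large $n$; letting $\epsilon \downarrow 0$ completes the lower bound.

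The main obstacle is the lower bound in the borderline dimension $d = 2$: the spacing exponent $\alpha$ must satisfy both $\alpha > \epsilon$, so that the oscillation $m_k^{1/2-\alpha+\epsilon'}$ is strictly smaller than $m_k^{1/2-\epsilon}$ (using the upper bound on $\|S_n\|$ and $\|G_n\|$ with $\epsilon' < \alpha - \epsilon$), and $\alpha < d\epsilon$, so that the Borel--Cantelli summability $\sum_k k^{-d\epsilon/\alpha} < \infty$ holds. These two requirements are simultaneously satisfiable exactly when $d > 1$, which is precisely the regime of the theorem.
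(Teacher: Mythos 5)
Your proposal is correct and follows essentially the same route as the paper: both prove the upper bound from the crude almost-sure bound $\|S_n\| = O(n^{1/2+\epsilon'})$, and both prove the lower bound by combining the local CLT (Theorem~\ref{thm:LCLT}) to get $\Pr(\|G_n\| \le n^{1/2-\epsilon}) = O(n^{-d\epsilon})$, a Borel--Cantelli argument along a polynomial subsequence, and a between-time oscillation estimate to fill in the gaps, with the constraint on the subsequence exponent being solvable exactly when $d \ge 2$. The only cosmetic differences are that the paper bounds $\|G_n\| \le \max_{i \le n}\|S_i\|$ directly rather than via your martingale decomposition $nG_n = (n+1)S_n - T_n$, parametrizes the subsequence as $a_n = \lceil n^\beta\rceil$ rather than $m_k = \lceil k^{1/\alpha}\rceil$ (the same thing with $\beta = 1/\alpha$), and packages the oscillation estimate as a separate lemma using $G_{j+1} - G_j = (S_{j+1} - G_j)/(j+1)$ rather than your single identity.
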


Obtaining necessary and sufficient conditions for recurrence and transience of $G_n$
is an open problem. For $d \geq 2$, we expect that $G_n$ is always `at least as transient'
as the situation in Theorem~\ref{thm:classification2}:

\begin{conjecture}
Suppose that $\supp X$ is not contained in a one-dimensional subspace of $\R^d$. Then
\[ \liminf_{n \to \infty} \frac{ \log \| G_n \|}{\log n} \geq \frac{1}{2}, \as \]
\end{conjecture}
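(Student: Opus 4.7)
The plan is to split according to whether $X$ has finite second moments, in order to reduce the easy case to Theorem~\ref{thm:classification2} and isolate the heavy-tailed regime as the main obstacle.

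\emph{Case 1: $\E \| X \|^2 < \infty$.} If $\bmu \neq \0$ then Proposition~\ref{prop:LLN} gives $n^{-1} G_n \to \bmu / 2$ a.s., so $\| G_n \|$ grows linearly and $\liminf \log \|G_n\| / \log n = 1$. Otherwise $\bmu = \0$. Let $W$ be the linear span of $\supp X$; by the hypothesis $k := \dim W \geq 2$. The walk $S_n$ lies in $W$ for all $n$, and since $\supp X$ spans $W$ and $\bmu=\0$, the restriction of $M$ to $W$ is positive-definite. Applying Theorem~\ref{thm:classification2} to the walk viewed inside $W \cong \R^k$ yields the conclusion, provided the lattice condition~\eqref{ass:basicd2} holds; if $X$ is strongly non-lattice, one replaces Theorem~\ref{thm:LCLT} in the proof of Theorem~\ref{thm:classification2} by a density version (Stone's local limit theorem) and reruns the same Borel--Cantelli argument.

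\emph{Case 2: $\E \| X \|^2 = \infty$.} This is the genuine obstacle. By the non-collinearity hypothesis we can pick $x_1, x_2 \in \supp X$ linearly independent; let $V$ be their span, a two-dimensional subspace of $\R^d$, and let $\pi$ denote orthogonal projection onto $V$. Then $\pi X$ has support not contained in any line through the origin of $V$, and $\pi G_n$ is the centre-of-mass process driven by the i.i.d.\ increments $\pi X_i$. Since $\| G_n \| \geq \| \pi G_n \|$, it suffices to prove the bound in $\R^2$ for $\pi X$. The target is an anti-concentration estimate
\[
\Pr \bigl( \| G_n \| \leq r \bigr) \leq C \bigl( r / \sqrt{n} \bigr)^{2}, \qquad 0 < r \leq \sqrt{n},
\]
which, combined with Borel--Cantelli on the dyadic subsequence $n_k = 2^k$ and a maximal-oscillation estimate bounding $\max_{n_k \leq n < n_{k+1}} \| G_n - G_{n_k} \|$ by $o(n_k^{1/2})$, yields $\| G_n \| \geq n^{1/2 - \eps}$ eventually for any $\eps > 0$.

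To reach this anti-concentration bound without second moments, I would truncate at scale $n^{\gamma}$ for a suitable $\gamma \in (0,1)$: write $X_i = X_i^{<} + X_i^{>}$ with $X_i^{<} := X_i \mathbf{1}\{ \| X_i \| \leq n^{\gamma}\}$. The bulk walk driven by $X_i^{<}$ has finite second moments and admits a truncated analogue of Theorem~\ref{thm:LCLT}, yielding the required $(r/\sqrt n)^2$ bound. The tail part only helps: conditional on the set of large-jump times and their values, the remainder is an independent centre-of-mass problem shifted by an independent macroscopic displacement, and such shifts cannot increase concentration near the origin. The principal difficulty is making the anti-concentration bound uniform in $n$ without assuming $X$ lies in a stable domain of attraction (otherwise no natural scaling exists); the most promising route is a Fourier-inversion argument based on $\phi_{G_n}(t) = \prod_{i=1}^n \phi_X\bigl( \tfrac{n-i+1}{n} t\bigr)$, combined with a truncated-second-moment lower bound on $-\log | \phi_X(t)|$ valid for $|t| \leq n^{-\gamma}$ and an Esseen smoothing inequality. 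The dyadic gap-filling reduces, via the recursion $G_{n+1} - G_n = (n+1)^{-1}(S_{n+1} - G_n)$, to controlling $\max_{n_k \leq n < n_{k+1}} \| S_n \|$, which is routine for the truncated part and handled by a direct Borel--Cantelli bound on large-jump contributions within a block for the tail part.
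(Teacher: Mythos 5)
This statement is a \emph{Conjecture} in the paper, explicitly labelled as an open problem, so there is no paper proof to compare your attempt against. What you have written is a plan rather than a proof, and you are honest about where it breaks down; here is a more precise accounting of what is and is not established.

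Your Case 1 is essentially sound. With $\bmu \neq \0$ the linear growth from Proposition~\ref{prop:LLN} handles matters; with $\bmu = \0$ the reduction to the span $W$ of $\supp X$ is correct, and the positive-definiteness of $M$ restricted to $W$ follows since $\mathbf{v}^\tra M \mathbf{v} = 0$ would force $X$ into a proper subspace of $W$. However, Theorem~\ref{thm:classification2} as stated requires assumption~\eqref{ass:basicd2} (lattice), so to cover all finite-variance $X$ you genuinely do need a non-lattice analogue of Theorem~\ref{thm:LCLT} (or a Stone-type box estimate giving $\Pr(G_n \in B(\bx,r)) = O((r/\sqrt n)^d)$ uniformly in $\bx$), and there are also mixed lattice/non-lattice cases that a naive dichotomy does not cleanly cover. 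This is doable but is real work beyond what the paper supplies, and you should say so rather than relegate it to a parenthetical.

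Your Case 2 is where the conjecture actually lives, and your sketch has two genuine gaps. First, the anti-concentration bound $\Pr(\|G_n\| \le r) \le C(r/\sqrt n)^2$ via truncation at scale $n^\gamma$: once you truncate, the truncated mean need not vanish even if $\bmu = \0$ (or $\bmu$ may be undefined entirely when $\E\|X\| = \infty$), and the resulting drift of order $n \cdot \E[X \mathbf{1}\{\|X\|>n^\gamma\}]$ can be large compared to $\sqrt n$ with no a priori control absent a regular-variation hypothesis. Your Fourier route $\Phi_{G_n}(\bt) = \prod_j \varphi(\tfrac{n-j+1}{n}\bt)$ is the right object, but you would need a lower bound on $-\log|\varphi(\bt)|$ uniform over the annulus $n^{-1/2} \lesssim \|\bt\| \lesssim n^{-\gamma}$ and one that does not degenerate as the truncated variance drifts; none of the lemmas in Appendix~\ref{sec:char-fn} supply this without moment assumptions. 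Second, the claim that conditioning on the large-jump contribution ``cannot increase concentration near the origin'' requires the anti-concentration bound for the truncated part to be uniform over the centre of the ball; that uniformity is exactly what the missing local estimate would provide, so the argument is circular at present. You have correctly located the obstacle; it is just that naming the obstacle is not the same as overcoming it, and this step is precisely why the statement remains a conjecture.
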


 Section~\ref{sec:examples} verifies our main assumptions for a couple of simple examples.
The proof of Theorem~\ref{thm:LCLT} is given in Section~\ref{sec:LCLT-proof}.
The proof of Theorem~\ref{thm:classification} uses Proposition~\ref{prop:CLT},
some observations following from the Hewitt--Savage zero--one law, and the fact that in the
case where $\E X = 0$ oscillating behaviour is sufficient for $\liminf_{n \to \infty} | G_n - x| = 0$: see Section~\ref{sec:one-dimension}. 
The proof of Theorem~\ref{thm:stable-transience} uses another local limit theorem (Theorem~\ref{thm:SLCLT})
and is also presented in Section~\ref{sec:one-dimension}. 
The proof of Theorem~\ref{thm:classification2}
relies on Theorem~\ref{thm:LCLT}: see Section~\ref{sec:transience}. Appendix~\ref{sec:char-fn} collects auxiliary results on lattice distributions and characteristic functions that we need for the proofs of our local limit theorems. 
For completeness we include the proofs of Propositions~\ref{prop:LLN} and~\ref{prop:CLT} in Appendix~\ref{sec:appendix}.

\section{Examples}
\label{sec:examples}

We use the notation $\varphi (\bt) := \E [ \re^{i \bt^\tra X} ]$ for the characteristic function of $X$.
Set 
$U := \{ \bt \in \R^d : | \varphi (\bt ) | = 1 \}$, and
given an  invertible $d$ by $d$ matrix $H$, set $S_H := 2 \pi ( H^\tra)^{-1} \Z^d$.

\begin{example}[Lazy SSRW on $\Z^d$]
Let $\be_1, \ldots, \be_d$ be the standard orthonormal basis vectors of $\R^d$, and suppose that
$\Pr ( X = \be_i ) = \Pr ( X = - \be_i ) = \frac{1}{4d}$ for all $i$, and $\Pr ( X = \0) = \frac{1}{2}$.
Then for $\bb = \0$ and $H = I$,  the $d$ by $d$ identity matrix, we have $\Pr ( X \in \Z^d ) =1$.
To verify that $L = \Z^d$ is minimal, it is sufficient (see Lemma~\ref{lem:equivalent}) to check
that $U = S_H = 2 \pi \Z^d$.
If $\bt = ( t_1, \ldots, t_d ) \in \R^d$,
\[ \varphi (\bt) = \frac{1}{2} + \frac{1}{4d} \sum_{j=1}^d \left( \re^{it_j} + \re^{-it_j} \right) = \frac{1}{2} + \frac{1}{2d} \sum_{j=1}^d \cos  t_j .\]
Thus $\bt \in U$ if and only if $\cos t_j = 1$ for all $j$, i.e., $U = 2 \pi \Z^d = S_H$, as required.
Note that we could alternatively use the bound in Lemma~\ref{lem:h-bounded} to check that $h=1$ is maximal.
\end{example}

\begin{example}[SSRW on $\Z^d$]
Suppose that $\Pr ( X = \be_i ) = \Pr ( X = - \be_i ) = \frac{1}{2d}$ for all $i$.
For SSRW the construction of $H$ for which~\eqref{ass:basicd2} holds is non-trivial.
For $d=1$, we take $b = -1$ and $h = 2$. In general $d \geq 2$, we take $H = ( h_{ij} )$ and $\bb = (b_i)$ defined as follows.
If $d = 2n - 1$ for $n \ge 2, n \in \Z$, we take
\begin{align*}
b_i &= -1 \quad \text{for all } i = 1, 2, \ldots , d; \\
h_{ij} &=
\begin{cases}
1 & \text{if } i-j \equiv 0 \text{ or } n \pmod{2n-1} ,\\
0 & \text{otherwise}.
\end{cases}
\end{align*}
If $d = 2n $ for $n \ge 1, n \in \Z$, we take
\begin{align*}
b_i &=
\begin{cases}
0 & \text{if } i=2n ,\\
-1 & \text{otherwise};
\end{cases}
\\
h_{ij} &=
\begin{cases}
-1 & \text{if } (i,j)= (2n, 1), \\
1 & \text{if } j-i \equiv 0 \text{ or } 1 \pmod{2n} \text{ and } (i,j) \neq (2n, 1),\\
0 & \text{otherwise}.
\end{cases}
\end{align*}
For example, for $d=2$ we have
\begin{equation*}
\bb =
\begin{pmatrix}
-1 \\
0
\end{pmatrix} 
\quad \text{and} \quad H=
\begin{pmatrix}
1 & 1 \\
-1 & 1
\end{pmatrix}.
\end{equation*}
For $d=3$, we have
\begin{equation*}
\bb =
\begin{pmatrix}
-1 \\
-1 \\
-1 
\end{pmatrix} 
\quad \text{and} \quad H=
\begin{pmatrix}
1 & 1 & 0 \\
0 & 1 & 1 \\
1 & 0 & 1
\end{pmatrix}.
\end{equation*}
For $d=4$, we have 
\begin{equation*}
\bb =
\begin{pmatrix}
-1 \\
-1 \\
-1 \\
0
\end{pmatrix} 
\quad \text{and} \quad H=
\begin{pmatrix}
1 & 1 & 0 & 0 \\
0 & 1 & 1 & 0 \\
0 & 0 & 1 & 1 \\
-1 & 0 & 0 & 1
\end{pmatrix}.
\end{equation*}
For $d=5$, we have
\begin{equation*}
\bb =
\begin{pmatrix}
-1 \\
-1 \\
-1 \\
-1 \\
-1
\end{pmatrix} 
\quad \text{and} \quad H=
\begin{pmatrix}
1 & 0 & 1 & 0 & 0 \\
0 & 1 & 0 & 1 & 0 \\
0 & 0 & 1 & 0 & 1 \\
1 & 0 & 0 & 1 & 0 \\
0 & 1 & 0 & 0 & 1
\end{pmatrix},
\end{equation*}
and so on. Note that $h=2$ for all such $H$. It is elementary to verify that
$\Pr ( X \in \bb + H \{ 0, 1 \}^d ) = 1$. It suffices to check
that $H^{-1} ( \bx - \bb ) \in \{0,1\}^d$ for any $\bx = \pm \be_i$.
For example, in the case $d=2n-1$ we have that 
$H^{-1}$ has elements $h_{ij}^{-1}$ given by
\[ h^{-1}_{ij} = \begin{cases} \frac{1}{2} & \text{if } i - j = 0, 1, \ldots, n-1 \pmod{2n-1} , \\
- \frac{1}{2} &\text{otherwise} , \end{cases} \]
and then one checks that, for example, $H^{-1} ( \be_i - \bb) = \ba$ where $\ba$ has all components zero apart from
$a_i = \cdots = a_{i+n-1} = 1$ (for $i \leq n$). The other cases are similar. 

We show that~\eqref{ass:basicd2} holds for SSRW with this choice of $H$,
by checking  (see Lemma~\ref{lem:equivalent}) that $U = S_H$. Since Lemma~\ref{lem:property} shows that
$S_H \subseteq U$, it suffices to show that $U \subseteq S_H$.
For SSRW on $\Z^d$, 
if $\bt = ( t_1, \ldots, t_d ) \in \R^d$,
\[ \varphi ( \bt ) = \frac{1}{2d} \sum_{j=1}^d \left( \re^{it_j} + \re^{-it_j} \right) = \frac{1}{d} \sum_{j=1}^d \cos  t_j .\]
So $\bt \in U$ if and only if
$| \sum_{j=1}^d \cos t_j | = d$, which occurs if and only if either (i)
$\cos t_j = 1$ for all $j$, or (ii) $\cos t_j = -1$ for all $j$.
Case (i) is equivalent to 
$\bt \in 2 \pi \Z^d$ and case (ii) is equivalent to 
$\bt \in \pi \vo + 2 \pi \Z^d$, where $\vo$ is the vector of all $1$s.
Hence
\[ U = ( 2 \pi \Z^d ) \cup (\pi \vo + 2 \pi \Z^d ) .\]

Consider $\bx \in U$. Then for some $\ba \in \Z^d$, either
 (i) $\bx = 2 \pi \ba$, or (ii) $\bx = \pi \vo + 2 \pi \ba$.
In case (i), let $\bz = H^\tra \ba$; then since all entries in $H$
are integers, we have
$\bz \in \Z^d$ and
$2\pi ( H^\tra )^{-1} \bz = 2 \pi \ba = \bx$, so $\bx \in S_H$.
In case (ii), let $\bz = H^\tra ( \frac{1}{2} \vo + \ba )$.
Note that if $d$ is odd then $\frac{1}{2} H^\tra \vo = \vo$ while if
$d$ is even, $\frac{1}{2} H^\tra \vo = (0,1,1,\ldots,1)^\tra$;
in any case it follows that $\bz \in \Z^d$.
Then  
$2 \pi ( H^\tra )^{-1} \bz = \pi \vo + 2 \pi \ba = \bx$, so $\bx \in S_H$.
Thus $U \subseteq S_H$.
\end{example}

\section{Local central limit theorem}
\label{sec:LCLT-proof}

This section is devoted to the proof of Theorem~\ref{thm:LCLT}. 
The outline of the proof mirrors the standard Fourier-analytic 
proof of the local central limit theorem for
the random walk: compare e.g.~\cite[Ch.~9]{GK}, \cite[\S 3.5]{RD}, or \cite[Ch.~4]{IL}
for the one-dimensional case, and~\cite[\S\S 2.2--2.3]{LL} for the case of walks on $\Z^d$. The details
of the proof require some extra effort, however.

First we show that it suffices to establish Theorem~\ref{thm:LCLT} in the case
where $\bb = \0$ and $H = I$ (the identity). To see this,
suppose that $X \in \bb + H \mathbb{Z}^d$ and set $\tilde X = H^{-1} ( X - \bb )$.
Then $\tilde X \in \Z^d$. By linearity of expectation, we have 
\[ \tilde \bmu := \E \tilde X = H^{-1} ( \bmu - \bb), \text{ and }
\tilde M := \E [ ( \tilde X - \tilde \bmu) ( \tilde X - \tilde \bmu)^\tra ] = H^{-1} M (H^{-1} )^\tra. \]
Note that $(H^{-1})^\tra$ is nonsingular, so $(H^{-1})^\tra \bx \neq \0$ for all $\bx \neq \0$.
Hence for $\bx \neq \0$,
$\bx^\tra \tilde M \bx = \by^\tra M \by$ where $\by = (H^{-1})^\tra \bx \neq \0$, so that
since $M$ is positive definite we have $\bx^\tra \tilde M \bx > 0$; hence $\tilde M$ is also positive definite.
Also, $\tilde S_n := \sum_{i=1}^n \tilde X_i = H^{-1} (S_n - n\bb)$ and
$\tilde G_n := n^{-1} \sum_{i=1}^n \tilde S_i = H^{-1} ( G_n - \frac{n+1}{2} \bb )$.
The assumption
that $H \Z^d$ is minimal for $X$ implies that $\Z^d$ is minimal for $\tilde X$.
Thus the process defined by $\tilde X$ satisfies the hypotheses of Theorem~\ref{thm:LCLT} in the case
where $\bb = \0$ and $H = I$, with mean $\tilde \bmu$ and covariance $\tilde M$,
and that result yields
\begin{equation}
\label{eq:reduction1}
 \lim_{n \to \infty} \sup_{\bx \in n^{-3/2} \Z^d} \left| n^{3d/2} \Pr ( n^{-1/2} \tilde G_n = \bx ) - \tilde \nu \left( \bx - \frac{(n+1)}{2 n^{1/2}} \tilde \bmu \right) \right| = 0,\end{equation}
where
\[ \tilde \nu ( \bz ) := \frac{( \det \tilde M / 3 )^{-1/2}}{(2 \pi )^{d/2}}  \exp \left\{ - \frac{3}{2} \bz^\tra \tilde M^{-1} \bz \right\} .\]
But
\[ \Pr ( n^{-1/2} \tilde G_n = \bx ) = \Pr \left( n^{-1/2} G_n = \frac{(n+1)}{2 n^{1/2}} \bb + H \bx \right) = \Pr (n^{-1/2} G_n = \by ) \]
where $\by = \frac{(n+1)}{2 n^{1/2}} \bb + H \bx$
so $\by \in n^{-3/2} ( \frac{1}{2} n(n+1) \bb + H \Z^d )$.
Also,
\begin{align*}   \bx - \frac{(n+1)}{2 n^{1/2}} \tilde \bmu   & = 
\left( H^{-1} \by - \frac{(n+1)}{2n^{1/2}} H^{-1} \bb \right) -\frac{(n+1)}{2n^{1/2}} H^{-1} ( \bmu -\bb) \\
& =  H^{-1} \by -  \frac{(n+1)}{2n^{1/2}} H^{-1}  \bmu .
\end{align*}
Hence, since $\tilde M^{-1} = H^\tra M^{-1} H$ and $\det \tilde M = h^{-2} \det M$,
\begin{align*}
\tilde \nu \left( \bx - \frac{(n+1)}{2 n^{1/2}} \tilde \bmu \right)  & =\frac{( \det \tilde M / 3 )^{-1/2}}{(2 \pi )^{d/2}} 
 \exp \left\{ - \frac{3}{2} \left(  \by - \frac{(n+1)}{2n^{1/2}} \bmu \right)^\tra M^{-1}
\left(  \by - \frac{(n+1)}{2 n^{1/2}} \bmu \right) \right\} \\
& = h \nu \left(  \by - \frac{(n+1)}{2 n^{1/2}} \bmu \right) .
 \end{align*}
It follows that~\eqref{eq:reduction1} is equivalent to
\[ \lim_{n \to \infty} \sup_{\by \in n^{-3/2} ( \frac{1}{2} n(n+1) \bb + H \Z^d ) } \left| \frac{n^{3d/2}}{h} \Pr ( n^{-1/2}  G_n = \by ) -  \nu \left(  \by - \frac{(n+1)}{2 n^{1/2}} \bmu \right) \right| = 0 ,\]
which is the general statement of Theorem~\ref{thm:LCLT}. Thus for the remainder of this section we  suppose that $\bb = \0$ and $H = I$; hence $\cL_n = n^{-3/2} \Z^d$.

Let $Y_n := \sum_{i=1}^{n}{S_i}$ and thus $G_n = Y_n/n$. 
Recall that $\varphi$ denotes the characteristic function (ch.f.) of $X$,
and let $\Phi_n$ be the ch.f.~of  $n^{-3/2}Y_n$, i.e., for $\bt \in \R^d$,
\[ \varphi (\bt) := \E \re^{i\bt^\tra X}, \text{ and } \Phi_n(\bt) := \E \re^{in^{-3/2}\bt^\tra Y_n}. \]
Denoting the smallest eigenvalue of $M$ by $\lambda_{\rm min} (M)$ 
and writing  $\hat \bt := \bt / \| \bt \|$ for $\bt \neq \0$,
we have that
 \begin{equation}
\label{eq:positive-definite}
\inf_{\bt \neq \0} \hat \bt^\tra M \hat \bt = \lambda_{\rm min} (M) >0, 
\end{equation}
since $\lambda_{\rm min} (M)$ is an eigenvalue of a positive-definite matrix
under assumption~\eqref{ass:basicd}.
Define
\begin{equation}
\label{f-def}
 f_n ( \bt ) := \exp \left\{ \frac{i (n+1) \bt^\tra \bmu}{2 n^{1/2} } - \frac{ \bt^\tra M \bt}{6} \right\} .
\end{equation}
For $\ell \in (0,\infty)$ set $R_1 := [-\ell,\ell]^d$. 
Our starting point for the proof of the local limit theorem is the following.

\begin{lemma}
\label{lem:lclt-first}
Suppose that~\eqref{ass:basicd} holds and that $\Pr ( X \in \Z^d ) = 1$.
 Then, for any $\ell \in (0,\infty)$,
\begin{align*}
\sup_{\bx \in \cL_n} \left|  n^{3d/2}  p_n(\bx)-\nu \left(\bx - \frac{(n+1)}{2n^{1/2}} \bmu \right) \right| &
\le \int_{R_1}
 \left| D_n(\bt) \right| \ud \bt  +  
\int_{R(n) \setminus R_1} | \Phi_n (\bt) | \ud \bt \\
& {} \qquad {} + \int_{\R^d \setminus R_1} \exp \left\{ -\frac{ \lambda_{\rm min} (M)}{6}  \| \bt \|^2 \right\} \ud \bt ,
\end{align*}
where $R(n) := [-\pi n^{3/2} , \pi n^{3/2} ]^{d}$
 and $D_n(\bt):= \Phi_n(\bt) - f_n (\bt)$. 
\end{lemma}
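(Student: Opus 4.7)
The plan is to apply Fourier (characteristic-function) inversion to $p_n(\bx)$ and to the Gaussian density $n\bigl(\bx - \tfrac{n+1}{2n^{1/2}}\bmu\bigr)$ separately, then subtract and estimate pointwise.

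First, since we are reduced to the case $\bb = \0$, $H = I$, the sum $Y_n = \sum_{i=1}^n S_i$ lies in $\Z^d$, so $n^{-1/2}G_n = n^{-3/2}Y_n$ takes values on the lattice $\cL_n = n^{-3/2}\Z^d$ with spacing $n^{-3/2}$. Starting from the standard Fourier inversion formula for the mass function of a $\Z^d$-valued random vector applied to $Y_n$, and performing the change of variables $\bs = n^{-3/2}\bt$, I would obtain, for each $\bx \in \cL_n$,
\[ n^{3d/2}\,p_n(\bx) \;=\; \frac{1}{(2\pi)^d}\int_{R(n)} e^{-i\bt^\tra \bx}\,\Phi_n(\bt)\,d\bt. \]
The box $R(n) = [-\pi n^{3/2},\pi n^{3/2}]^d$ is precisely the dual fundamental cell of $\cL_n$ under this rescaling, which is why the integration is restricted to $R(n)$ rather than all of $\R^d$.

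Next, I would recognise $f_n(\bt)$ as the characteristic function of $\cN_d\bigl(\tfrac{n+1}{2n^{1/2}}\bmu,\, M/3\bigr)$, whose density at $\bx$ is exactly $n\bigl(\bx - \tfrac{n+1}{2n^{1/2}}\bmu\bigr)$; positive-definiteness of $M$ (assumption~\eqref{ass:basicd}) ensures $f_n \in L^1(\R^d)$, so ordinary Fourier inversion yields
\[ n\!\left(\bx - \frac{n+1}{2n^{1/2}}\bmu\right) \;=\; \frac{1}{(2\pi)^d}\int_{\R^d} e^{-i\bt^\tra \bx}\,f_n(\bt)\,d\bt. \]
Subtracting the two identities and splitting the $\R^d$-integral as $\int_{R(n)} + \int_{R^\rc(n)}$, the two integrands over $R(n)$ combine into $e^{-i\bt^\tra\bx}D_n(\bt)$, leaving a leftover tail $-\int_{R^\rc(n)} e^{-i\bt^\tra\bx}f_n(\bt)\,d\bt$. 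Taking absolute values pointwise in $\bt$ kills the factor $|e^{-i\bt^\tra\bx}|=1$ (hence the resulting bound is uniform in $\bx$), and on the tail I would use $|f_n(\bt)| = \exp(-\bt^\tra M\bt/6) \le \exp(-\tfrac{1}{6}\lambda_{\rm min}(M)\|\bt\|^2)$ from~\eqref{eq:positive-definite}. Absorbing the harmless prefactor $(2\pi)^{-d}\le 1$ then yields exactly the stated inequality.

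There is no serious obstacle at this stage: the lemma is a routine Fourier-analytic identity combined with a triangle inequality, and the only care required is to match the spacing $n^{-3/2}$ of $\cL_n$ with the correct dual box $R(n)$ via the rescaling that relates $\Phi_n$ to the characteristic function of $Y_n$. The substantive work of the local CLT — showing that $\int_{R(n)}|D_n|\,d\bt \to 0$ and that the Gaussian tail over $R^\rc(n)$ vanishes — is a separate matter, handled by splitting $R(n)$ into a neighbourhood of the origin (where a Taylor expansion of $\log\varphi$ produces the approximation by $f_n$) and its complement (where $|\varphi|$ is bounded away from $1$ via~\eqref{ass:basicd2}).
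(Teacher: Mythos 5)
Your proposal is correct and follows essentially the same route as the paper: Fourier inversion for a $\Z^d$-valued variable applied to $Y_n$, rescaling $\bu = n^{-3/2}\bt$ to land on the dual cell $R(n)$, inversion of the Gaussian density via $f_n$, then subtraction, triangle inequality, $|e^{-i\bt^\tra\bx}|=1$, and $\bt^\tra M\bt \ge \lambda_{\rm min}(M)\|\bt\|^2$ from~\eqref{eq:positive-definite}. The only cosmetic difference is that the paper drops the prefactor using $\pi > 1$ rather than phrasing it as $(2\pi)^{-d}\le 1$, which amounts to the same thing.
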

\begin{proof}
For a random variable $W \in \Z^d$, by the 
inversion formula for the characteristic function (see e.g.~\cite[Corollary~2.2.3, p.~29]{LL})
we have that 
\begin{equation}
\Pr(W = \by ) = \frac{1}{(2\pi )^{d}} \int_{[-\pi,\pi]^d} \re^{-i\bu^\tra \by} \E \bigl[ \re^{i\bu^\tra W} \bigr] \ud \bu, 
\label{eq:formula}
\end{equation}
for $\by \in \Z^d$. Now we have for $\bx \in \cL_n$,
$
p_n(\bx) = \Pr ( Y_n= n^{3/2} \bx )$,
so applying~\eqref{eq:formula} with $W=Y_n \in \Z^d$, we get for $\bx \in \cL_n$ that
\begin{align*}
p_n(\bx )  = 
\frac{1}{(2\pi )^{d}} 
\int_{[-\pi,\pi]^d} \re^{ -i n^{3/2} \bu^\tra \bx } \E\bigl[ \re^{i\bu^\tra Y_n }\bigr] \ud \bu.
\end{align*}
Using the substitution $\bu =n^{-3/2} \bt$, we obtain
\begin{equation}
 n^{3d/2}  p_n(\bx ) = \frac{1}{(2\pi )^{d}} \int_{[-\pi n^{3/2},\pi n^{3/2}]^d} \re^{-i\bt^\tra \bx} \Phi_n(\bt)\ud \bt. 
\label{eq:cal11} 
\end{equation}
On the other hand, since the probability density 
$\nu (\bx - \frac{(n+1)}{2n^{1/2}} \bmu)$, with $\nu$ as defined at~\eqref{eq:ndef}, corresponds to the ch.f.~$f_n (\bt)$
as defined at~\eqref{f-def}, the inversion formula for densities yields
\begin{equation}
\nu \left( \bx - \frac{(n+1)}{2n^{1/2}} \bmu \right)=\frac{1}{(2\pi)^d}\int_{\R^d} \re^{-i\bt^\tra \bx} f_n (\bt) \ud \bt, 
\label{eq:cal13}
\end{equation}
for $\bx \in \R^d$. Now we subtract~\eqref{eq:cal13} from~\eqref{eq:cal11} to get
\begin{align*}
 n^{3d/2}  p_n(\bx)- \nu \left( \bx - \frac{(n+1)}{2n^{1/2}} \bmu \right)
& = \frac{1}{(2\pi)^d} \int_{R_1} \re^{-i\bt^\tra \bx} D_n (\bt) \ud \bt 
+ \frac{1}{(2\pi)^d} \int_{R(n) \setminus R_1} \re^{-i\bt^\tra \bx}  \Phi_n (\bt ) \ud \bt \\
& {} \qquad {} - \frac{1}{(2\pi)^d} \int_{\R^d \setminus R_1} \re^{-i\bt^\tra \bx} f_n (\bt) \ud \bt .\end{align*}
Thus, by~\eqref{f-def} and the triangle inequality with the estimates $\pi>1$ and $|\re^{-i \bt^\tra \bx}| \le 1$, 
\begin{align*}
 \sup_{\bx \in \cL_n} \left|  n^{3d/2}  p_n(\bx)- \nu \left( \bx - \frac{(n+1)}{2n^{1/2}} \bmu \right) \right|  
& \le \int_{R_1} 
 \left| D_n(\bt) \right| \ud \bt  +  \int_{R(n) \setminus R_1} | \Phi_n (\bt) | \ud \bt \\
& {} \qquad{} + \int_{\R^d \setminus R_1} \exp \left\{ -\frac{\bt^\tra M \bt}{6} \right\} \ud \bt ,
 \end{align*}
which with~\eqref{eq:positive-definite} yields the statement in the lemma.
\end{proof}

To prove Theorem~\ref{thm:LCLT} we must show that the right-hand side of the expression in Lemma~\ref{lem:lclt-first} approaches $0$ when $n \to \infty$.
To do so, we bound $| \Phi_n (\bt)|$ and $|D_n (\bt)|$ for appropriate regions of $\bt$.
Observing that $Y_n= \sum_{i=1}^n S_i=\sum_{j=1}^n (n-j+1)X_j$, we see
\[
\Phi_n(\bt) = \E\left[\exp\left\{ i n^{-3/2} \bt^\tra Y_n\right\} \right] = \E\left[\exp\left\{ in^{-3/2}\sum_{j=1}^{n}(n-j+1)\bt^\tra X_j\right\} \right].
\]
For fixed $n$, $\sum_{j=1}^{n}(n-j+1) \bt^\tra X_j \eqd \sum_{j=1}^{n}j \bt^\tra X_j$, so that, by independence,  
\[
\Phi_n(\bt) = \E\left[\exp\left\{ in^{-3/2} \sum_{j=1}^{n}j \bt^\tra X_j\right\} \right] = \prod_{j=1}^{n}\E\left[\exp\left\{ i n^{-3/2}j \bt^\tra X_j \right\} \right] .
\]
Hence we conclude that for $\bt \in \R^d$,
\begin{equation}
\label{eq:Phi-phi}
\Phi_n( \bt) 
= \prod_{j=1}^{n} 
\varphi(n^{-3/2}j \bt ).
\end{equation}
To study $\Phi_n$ we 
require certain characteristic function estimates, presented in Appendix~\ref{sec:char-fn}.

Recall that $R(n) = [-\pi n^{3/2} , \pi n^{3/2} ]^{d}$ and $R_1 = [ -\ell, \ell]^d$. Also define the regions
\begin{align*}
R_2(n) & :=  [-\delta \sqrt{n}, \delta \sqrt{n}]^d \setminus R_1 \\
R_3(n) & :=  [-\pi \sqrt{n} , \pi \sqrt{n} ]^d \setminus (R_1 \cup R_2 (n) ) \\
R_4(n) & := R (n) \setminus (R_1 \cup R_2 (n) \cup R_3 (n))
\end{align*}
where the  constant $\delta \in (0, \pi)$ will be chosen later.
We denote the corresponding integrals by 
\[ I_1 (n) := \int_{R_1} | D_n (\bt) | \ud \bt, \text{ and } I_k(n) :=\int_{R_k(n)} \left| \Phi_n(\bt) \right| \ud \bt, \text{ for } k \in \{2,3,4\}. \] 
\begin{lemma}
\label{lem:parts1}
For $\delta >0$ sufficiently small, the following statements are true.
\begin{itemize}
\item[(i)] For any $\ell \in (0,\infty)$, $\lim_{n \to \infty} I_1(n)=0$,
\item[(ii)] $\lim_{\ell \to \infty} \sup_n I_2(n)=0$, 
\item[(iii)] $\lim_{n \to \infty} I_3(n)=0$,
\item[(iv)] $\lim_{n \to \infty} I_4(n)=0$.
\end{itemize}
\end{lemma}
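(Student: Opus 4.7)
The plan is to bound each $I_k(n)$ using different estimates on $\varphi$ adapted to the scale of $\bt$.

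On $R_1 = [-A,A]^d$, I would take logarithms in \eqref{eq:Phi-phi} and apply the Taylor expansion $\log \varphi(\bs) = i \bs^\tra \bmu - \tfrac{1}{2} \bs^\tra M \bs + o(\|\bs\|^2)$, valid under~\eqref{ass:basicd}, using $\sum_{j=1}^n j = n(n+1)/2$ and $\sum_{j=1}^n j^2 = n(n+1)(2n+1)/6$ to obtain $\log \Phi_n(\bt) - \log f_n(\bt) \to 0$ pointwise on $R_1$. Since $|D_n| \le 2$, bounded convergence then yields $I_1(n) \to 0$ for every fixed $A$.

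On $R_2$, I would use the small-$\bs$ estimate $|\varphi(\bs)| \le \exp\{-c \|\bs\|^2\}$ for $\|\bs\| \le \eta$ (from the appendix on characteristic functions). Taking $\delta$ with $\sqrt{d}\,\delta \le \eta$ keeps every $n^{-3/2} j \bt$ in this regime for $\bt \in R_2$ and $j \le n$, so \eqref{eq:Phi-phi} gives $|\Phi_n(\bt)| \le \exp\{-c \|\bt\|^2 n^{-3} \sum_{j=1}^n j^2\} \le \exp\{-c'\|\bt\|^2\}$ uniformly in $n$, with a matching Gaussian bound on $|f_n|$ via \eqref{eq:positive-definite}. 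Therefore $I_2(n) \le \int_{\R^d \setminus R_1} 2 e^{-C\|\bt\|^2} \ud \bt$, which tends to $0$ as $A \to \infty$, uniformly in $n$. On $R_3$, the reduced lattice condition $L = \Z^d$ (Lemma~\ref{lem:equivalent}) together with continuity and compactness gives $\sup\{|\varphi(\bs)| : \bs \in [-\pi,\pi]^d,\, \|\bs\|_\infty \ge \delta\} =: 1 - \rho < 1$. For $\bt \in R_3$, some $|t_k| > \delta\sqrt{n}$, and every $j \in [n/2,n]$ places $n^{-3/2} j \bt$ inside $[-\pi,\pi]^d$ with $\|\cdot\|_\infty \ge \delta/2$, so $|\Phi_n(\bt)| \le (1-\rho)^{n/2 + O(1)}$; multiplying by $|R_3| = O(n^{d/2})$ kills the $\Phi_n$ integral, and the $f_n$ contribution decays by the same Gaussian tail bound as in $R_2$.

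The main obstacle is $R_4$, where $\bt$ can be as large as $\pi n^{3/2}$ in $\infty$-norm, and $n^{-3/2} j \bt$ may leave $[-\pi,\pi]^d$. Here I would exploit the $2\pi\Z^d$-periodicity of $|\varphi|$ (valid since $X \in \Z^d$): the $R_3$-type estimate $|\varphi(\bs)| \le 1-\rho$ still applies as long as the representative of $\bs$ modulo $2\pi\Z^d$ in $[-\pi,\pi]^d$ has $\infty$-norm at least $\eta$. Assuming without loss of generality $|t_1| > \pi\sqrt{n}$ and setting $v := |t_1|/n^{3/2} \in (\pi/n, \pi]$, the key counting claim I would establish is that, for $\eta$ sufficiently small, there is an absolute $c > 0$ such that
\[ \#\bigl\{ j \in \{1,\ldots,n\} : |jv \bmod 2\pi | \ge \eta \bigr\} \ge c n , \quad \text{uniformly in } v \in (\pi/n,\pi] \text{ and large } n. \]
This follows by counting the ``bad'' intervals $(2\pi k - \eta, 2\pi k + \eta)$ meeting $(0, nv]$ (at most $nv/(2\pi) + 1$ of them) and bounding the multiples of $v$ in each (at most $\lfloor 2\eta/v\rfloor + 1$), then optimising over $v$; the hardest subcase is $v$ of order $\pi/n$, where many multiples can accumulate in the single interval around $0$. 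The good $j$'s then give $|\varphi(n^{-3/2} j \bt)| \le 1 - \rho$, so $|\Phi_n(\bt)| \le (1-\rho)^{cn}$ uniformly on $R_4$; since $|R_4| \le (2\pi n^{3/2})^d$, the $\Phi_n$ integral is exponentially small in $n$, and the $f_n$ integral is again controlled by a Gaussian tail using $\|\bt\|_\infty > \pi\sqrt{n}$.
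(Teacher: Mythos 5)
Your proposal is correct, and for parts (i)--(iii) it follows essentially the same route as the paper: expand $\log\varphi$ via Lemma~\ref{lem:estchf} on $R_1$ and integrate; bound $|\Phi_n|$ by a Gaussian on $R_2$ (you bound $|\varphi|$ directly by $\exp\{-c\|\bs\|^2\}$ near $\0$, which is a small and harmless variant of the paper's manipulation of the complex logarithm, both resting on the same second-moment expansion and positive-definiteness of $M$); and on $R_3$ use the compactness/minimality bound $|\varphi|\le \re^{-c_\rho}$ away from $2\pi\Z^d$ together with the trick of keeping only $j\ge \lceil n/2\rceil$ in the product~\eqref{eq:Phi-phi}.

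For $R_4$ your argument is genuinely different from the paper's, and it works. The paper follows~\cite[Lemma~4.4]{DH}: it keeps the full $d$-dimensional picture, treating $\Lambda_n(\bt)$ as points spaced $\nu=n^{-3/2}\|\bt\|$ apart along the segment $L_n(\bt)$, counting the balls $B(\bx;\rho)$, $\bx\in 2\pi\Z^d$, that the segment meets (via the claim that between any two such balls one point of $\Lambda_n(\bt)$ must lie outside $S_H(\rho)$), and then case-splitting on the size of $K_n(\bt)$ and of $\nu$. You instead project onto one coordinate with $|t_i|>\pi\sqrt n$ and reduce to a purely one-dimensional count of how many multiples $jv$, $v=|t_i|/n^{3/2}\in(\pi/n,\pi]$, fall within $\eta$ of $2\pi\Z$: this is legitimate because distance to $2\pi\Z^d$ is at least the distance of any one coordinate to $2\pi\Z$, so a good projected coordinate certifies a good point. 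Your bound $(\tfrac{nv}{2\pi}+1)(\tfrac{2\eta}{v}+1)$ on the bad count, together with $\pi/n< v\le\pi$, gives bad $\le n(\tfrac{3\eta}{\pi}+\tfrac12)+O(1)$, hence good $\ge cn$ once $\eta<\pi/6$. What the projection buys you is that the geometric ``segment between two balls'' claim (which in the paper forces the specific choice $\rho=\pi/8$ and a small case analysis) disappears; the price is that $\eta$ must now be measured in the $\infty$-norm and fixed small in advance, with the corresponding constant $c_\rho$ coming from compactness, but that is exactly the form in which Lemma~\ref{lem:equivalent} is anyway applied on $R_3$. Both arguments yield the same $\exp\{-cn\}$ decay of $|\Phi_n|$ on $R_4$, which beats the polynomial volume $|R_4|\le(2\pi n^{3/2})^d$, and the $f_n$ tail is handled identically.
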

We will combine all the estimates at the end of the argument. 
\begin{proof}[Proof of Lemma~\ref{lem:parts1}]
First we aim to show that
\begin{align}
\lim_{n \to \infty} \sup_{\bt \in R_1} |D_n(\bt)| =0 .
\label{eq:sup1}
\end{align} 
Since $\E X =\bmu$ and $\E[(X -\bmu) (X-\bmu)^\tra]=M$, we have
$\E [ X X^\tra ] = M + \bmu \bmu^\tra$,
so that
Lemma~\ref{lem:estchf} implies, uniformly over 
$\bt \in R_1$, as $n \to \infty$, 
\begin{align*}
\prod_{j=1}^{n} \varphi(n^{-3/2}j\bt) 
&= \exp \left\{ \sum_{j=1}^{n} \log \left[ 1 + A( n, j, \bt ) + o(n^{-1}) \right] \right\},
\end{align*}
where 
\begin{equation}
\label{A-def}
 A (n, j, \bt) :=   i n^{-3/2} j   \bt^\tra \bmu - \frac{1}{2} n^{-3} j^2 \bt^\tra (M + \bmu \bmu^\tra) \bt  .\end{equation}
Taylor's theorem for a complex variable shows that for a constant $C < \infty$,
\begin{equation}
\label{eq:complex-taylor}
\left| \log ( 1+ z ) - \left( z - \frac{z^2}{2} \right) \right| \leq C | z |^3 ,
\end{equation}
for all $z$ with $|z| < 1/2$.
Note from~\eqref{A-def} that
\begin{equation}
\label{A-sq}
 A (n, j, \bt)^2 = - n^{-3} j^2 \bt^\tra \bmu \bmu^\tra \bt + \Delta_0 (n,j,\bt) , \end{equation}
where $\max_{1 \leq j \leq n} \sup_{\bt \in R_1} | \Delta_0 (n ,j, \bt) | = O ( n^{-3/2} )$.
Then, by~\eqref{eq:Phi-phi}, \eqref{eq:complex-taylor}, \eqref{A-sq},
and the fact that
$\max_{1 \leq j \leq n} \sup_{\bt \in R_1} | A (n ,j, \bt) | = O ( n^{-1/2} )$, 
 it follows that
\[
\Phi_n (\bt) = \exp \left\{ \sum_{j=1}^{n} \left(i n^{-3/2} j   \bt^\tra \bmu -\frac{1}{2}n^{-3}j^2 \bt^\tra M \bt \right) +  \Delta_0 ( n ,\bt)   \right\}, 
\]
where $\sup_{\bt \in R_1} | \Delta_0 ( n ,\bt) | \to 0$.
 Elementary algebra gives $\sum_{j=1}^{n} j  = \frac{1}{2}n(n+1)$ and $\sum_{j=1}^{n} j^2 = \frac{1}{6}n(n+1)(2n+1)$,
 so we obtain the estimate 
\begin{align}
\nonumber
\Phi_n(\bt )= \exp \left\{ \frac{i (n+1) \bt^\tra \bmu}{2n^{1/2}} -\frac{\bt^\tra M \bt}{6} + \Delta_1 ( n ,\bt) \right\},
\end{align} 
where $\sup_{\bt \in R_1} | \Delta_1 ( n ,\bt) | \to 0$ as $n \to \infty$.
Hence, by~\eqref{f-def},
\[ | D_n (\bt ) | = | \Phi_n (\bt) - f_n (\bt) | \leq \left| 1 - \exp \{ \Delta_1 (n, \bt) \} \right| ,\]
which establishes~\eqref{eq:sup1} and proves part (i) of the lemma. 

Fix $\eps \in (0, \lambda_{\rm min} (M)/ 12 )$.
Suppose that $\bt \in [0, \delta n^{1/2}]^d$.
Then for $1 \leq j \leq n$, we have $\| n^{-3/2} j \bt \| \leq \delta d^{1/2}$. 
Thus, from  Lemma~\ref{lem:estchf},
\begin{align*}
\varphi ( n^{-3/2} j \bt ) & =  1 + A (n,j ,\bt) + \Delta_1 (n,j,\bt) ,
\end{align*}
where $A (n, j ,\bt)$ is as defined at~\eqref{A-def}, and
$|\Delta_1 ( n, j, \bt ) | \leq \eps n^{-1} \| \bt \|^2$
for all $\bt \in  [0, \delta n^{1/2}]^d$ and $\delta$ sufficiently small.
Also note that $| A (n , j, \bt ) | \leq C n^{-1/2} \| \bt \|$, so that
\begin{equation}
\label{A-cubed}
 | A (n,j,\bt) |^3 \leq C n^{-3/2} \| \bt \|^3 \leq C' \delta n^{-1} \| \bt \|^2 \leq \eps n^{-1} \| \bt \|^2 ,\end{equation}
for $\delta$ sufficiently small; here $C$ and $C'$ are constants that do not depend on $\delta$.
Thus we may apply~\eqref{eq:complex-taylor} to obtain
\begin{align*}
\prod_{j=1}^n \varphi ( n^{-3/2} j \bt ) & = \exp \left\{ \sum_{j=1}^n \log \left[ 1 + A (n,j ,\bt) + \Delta_1 (n,j,\bt) \right] \right\} \\
& = \exp \left\{ \sum_{j=1}^n \left( A(n,j,\bt) - \frac{1}{2} A(n,j,\bt)^2 \right) + \Delta_1 (n, \bt) \right\} ,\end{align*}
where $| \Delta_1 (n, \bt ) | \leq \eps \| \bt \|^2 $ for $\delta$ sufficiently small.
Here~\eqref{A-sq} holds, where now, for all $\bt \in  [0, \delta n^{1/2}]^d$, 
similarly to~\eqref{A-cubed}, $|\Delta_0 (n, j, \bt ) | \leq \eps n^{-1} \| \bt \|^2$
for $\delta$ sufficiently small (depending on $\eps$).
So, for $\delta$ sufficiently small, for $\bt \in R_2 (n)$,
\[ \prod_{j=1}^n \varphi ( n^{-3/2} j \bt )  = \exp \left\{  \frac{ i (n+1) \bt^\tra \mu}{2n^{1/2}}
- \frac{\bt^\tra M \bt}{6} + \Delta_2 ( n , \bt ) \right\} ,\]
where $| \Delta_2 (n , \bt ) | \leq \eps \| \bt \|^2$ for all $n$ sufficiently large. Then,
by~\eqref{eq:positive-definite} and choice of $\eps$,
$\bt^\tra M \bt \geq 12 \eps \| \bt \|^2$, so that, by choice of  $\delta$,
\begin{align*}
| \Phi_n (\bt ) |   = \left| \exp \left\{ - \frac{ \bt ^\tra M \bt}{6} + \Delta_2 ( n , \bt ) \right\} \right|  
  \leq  \exp \{ -   \eps \| \bt \|^2 \} , \text{ for all } \bt \in R_2 (n).
	\end{align*}
So we have 
\begin{align*} 
I_2 (n) = \int_{R_2 (n)} | \Phi_n ( \bt ) | \ud \bt 
 \leq \int_{\R^d \setminus R_1} \exp \left\{ - \eps \| \bt \|^2 \right\} \ud \bt,
\end{align*}
for $\delta$ sufficiently small and $n$ sufficiently large. This yields part (ii) of the lemma. 

Now we proceed to estimate $I_3(n)$. 
First note that, by~\eqref{eq:Phi-phi}, 
\begin{equation}
|\Phi_n(\bt)| = \prod_{j=1}^{n}|\varphi(n^{-3/2}j\bt)| \le \prod_{j=\lceil n/2 \rceil}^{n}|\varphi(n^{-3/2}j\bt)|. 
\label{eq:cal20}
\end{equation}
For any $\bt \in R_3(n)$, we have
 $n^{-3/2}j\bt \in  [-\pi j / n,  \pi j / n]^d \setminus   [-\delta j / n, \delta j / n]^d$. 
In particular
\[ \bigcup_{j=\lceil n/2 \rceil}^n \{ n^{-3/2}j\bt \} \subset  [-\pi , \pi  ]^d \setminus [-\delta  / 2, \delta / 2]^d . \]
Thus we may apply the final statement in
Lemma~\ref{lem:equivalent} for some $\rho$ sufficiently small 
to obtain
\[ \sup_{\bt \in R_3(n)} \sup_{\lceil n/2 \rceil \leq j \leq n} | \varphi ( n^{-3/2} j \bt ) | \leq \re^{-c_\rho} ,\]
for some $c_\rho >0$.
Hence from~\eqref{eq:cal20} we have  $\sup_{\bt \in R_3 (n)} |\Phi_n(\bt)| \le \re^{-n c_\rho / 2}$.
It follows that
\begin{align*}
I_3(n)
  \le \int_{[ - \pi \sqrt{n}, \pi \sqrt{n} ]^d} \re^{-n c_\rho / 2} 
  \leq (2\pi)^d n^{d/2} \re^{-n c_\rho / 2} .
\end{align*}
This gives part (iii) of the lemma. 

It remains to estimate $I_4(n)$. Fix $\bt \in R_4(n)$, and consider sets
\begin{align*}
\Lambda_n(\bt) = \left\{ n^{-3/2} j \bt : j \in \{1,2, \ldots, n\} \right\} , \text{ and } 
L_n(\bt) &= \left\{ n^{-3/2} u \bt : 1 \le u \le n \right\} .
\end{align*}
Recall that $S_H := 2\pi \Z^d$ in the case $H = I$, and, for $\rho >0$, define $S_H(\rho) := \cup_{\by \in S} B(\by;\rho)$, 
where $B(\by; \rho)$ is the open Euclidean ball of radius $\rho$ centred at $\by \in \R^d$.
Define $N_n(\bt) := | \Lambda_n(\bt) \setminus S_H(\rho)|$.
  Lemma~\ref{lem:equivalent} and~\eqref{eq:Phi-phi} show that
\begin{equation}
\label{eq:counting_bound}
|\Phi_n(\bt)| = \prod_{j=1}^{n}|\varphi(n^{-3/2}j\bt)| \le \exp \{ - c_\rho N_n(\bt)\},
\end{equation}
for some positive constant $c_\rho$. 
We aim to show that $N_n (\bt)$ is bounded below by a constant times $n$.
To do this we use a counting argument related  to one used in~\cite[Lemma~4.4]{DH}.

Let $K_n(\bt)$ be the number of $\bx \in S_H$ such that $B(\bx ; \rho) \cap L_n(\bt) \neq \emptyset$. 
Set
$\kappa := n^{-3/2}\|\bt\|$. As $\bt \in [-\pi n^{3/2}, \pi n^{3/2}]^d \setminus [-\pi n^{1/2}, \pi n^{1/2}]^d$, we have
\begin{equation}
\label{nu-bounds}
\frac{\pi}{n} \le \kappa \le \pi \sqrt{d}.
\end{equation}
Take $\rho = \pi / 8$. We claim that between any two balls of $S_H(\rho)$ that intersect $L_n(\bt)$
there is at least one point of $\Lambda_n (\bt) \setminus S_H (\rho)$. 
Write $\by_j = n^{-3/2} j \bt$ for $j \in \{1,\ldots, n\}$. Suppose $i_1, i_2 \in \{1,\ldots, n\}$
with $i_1 < i_2$ and $\bx_1, \bx_2 \in S_H$ with $\bx_1 \neq \bx_2$ are such that $\by_{i_1} \in B (\bx_1 ; \rho)$
and $\by_{i_2} \in B (\bx_2, \rho)$. To prove the claim we need to show that there exists $j$ with $i_1 < j < i_2$
such that $\by_j \notin S_H(\rho)$. First note that since $n^{-3/2} \bt \in [ -\pi, \pi]^d$ and 
$\by_{i_1} \in B (\bx_1 ; \rho)$, the point $\by_{i_1 +1}$ must lie in the box $Q(\bx_1) = \bx_1 + [-9\pi/8,9\pi/8]^d$.
As $9\pi/8 < 15\pi/8 = 2\pi - \rho$, the box $Q(\bx_1)$ does not intersect any balls in $S_H(\rho)$ other than $B (\bx_1 ; \rho)$.
There are two cases. Either (i) $\by_{i_1 +1} \notin B (\bx_1 ; \rho)$, or (ii)
$\by_{i_1 +1} \in B (\bx_1 ; \rho)$. In case (i) the claim is proved. In case (ii), we have $\kappa \leq 2 \rho$, and since
$B (\bx_1 ; 3 \rho)$ is contained in $Q(\bx_1)$, there is some $j$ with $i_1 + 1 < j < i_2$ such that $\by_j \notin S_H(\rho)$,
proving the claim.
Hence
\begin{equation}
\label{N-lower-bound2}
 N_n (\bt) \geq   K_n (\bt) - 1  
 .\end{equation}
The total length of the line $L_n(\bt)$ is less than $\kappa n$,
and each segment of $L_n (\bt)$ between neighbouring balls that intersect $L_n (\bt)$
has length at least $2\pi - 2\rho$, so 
$(K_n (\bt) -1 ) (2\pi - 2\rho) \leq \kappa n$, or, equivalently,
\begin{equation}
\label{K-upper-bound}
K_n(\bt) \le \frac{4\kappa n}{7\pi} +1.
\end{equation}
Moreover, each ball of $S_H(\rho)$ that intersects $L_n (\bt)$ contains
at most $1 + 2 \rho/ \kappa$ points of $\Lambda_n (\bt)$, so that 
the number of points in $\Lambda_n(\bt) \cap S_H(\rho)$ satisfies
\begin{equation}
\label{N-lower-bound3}
n-N_n(\bt) \leq K_n(\bt) \left(\frac{\pi}{4\kappa}+1\right).
\end{equation}
Let $\eps >0$ be a constant. We consider the following two cases. \\
\emph{Case 1:} $K_n(\bt) \le \eps n \kappa$.
In this case we have from~\eqref{N-lower-bound3} and~\eqref{nu-bounds} that
\begin{equation*}
N_n(\bt) \ge n- \frac{\pi \eps}{4}n - \eps n\kappa \ge n- \frac{\pi}{4}\eps n - \eps n \pi \sqrt{d}  \ge \eps n,
\end{equation*}
for $\eps$ small enough. \\
\emph{Case 2:} $K_n(\mathbf{t}) > \eps n \kappa$.
If $\kappa \ge \frac{1}{2}$, then we have from~\eqref{N-lower-bound2} that,
\begin{equation*}
N_n(\mathbf{t}) \ge K_n(\mathbf{t})-1 \ge (\eps/3) n ,
\end{equation*}
for $n$ sufficiently large.
On the other hand, if $\kappa < \frac{1}{2}$, then~\eqref{N-lower-bound3} and~\eqref{K-upper-bound} show 
that
\begin{align*}
N_n(\bt) & \ge  n- \left(\frac{4\kappa n}{7 \pi } + 1\right)\left(\frac{\pi}{4\kappa} +1\right) \\
& = \frac{6n}{7} - \frac{\pi}{4 \kappa} - \frac{4 \kappa n}{7 \pi} -1 \\
& \geq \frac{6n}{7} - \frac{n}{4} - \frac{2 n}{7 \pi} -1 ,
\end{align*}
by~\eqref{nu-bounds} and the assumption $\kappa < \frac{1}{2}$.
Thus we have shown that, in any case,
$N_n(\bt) \ge \eps n$
for some constant $\eps >0$ and all $n$ sufficiently large. Thus from~\eqref{eq:counting_bound} we conclude that 
\begin{align*}
  I_4 (n)    = \int_{R_4(n)} | \Phi_n (\bt) | \ud \bt  
  \le ( 2\pi n^{3/2} )^d  \exp\left\{-\eps c_\rho n \right\}  .
\end{align*}
Hence we have proved the last statement in Lemma~\ref{lem:parts1}.
\end{proof}

Now we can gather all our estimates and complete the proof of Theorem~\ref{thm:LCLT}. 

\begin{proof}[Proof of Theorem~\ref{thm:LCLT}]
As explained at the start of this section,
it suffices to prove the case where $\bb = \0$ and $H = I$ (so $h=1$). Then we have from Lemma~\ref{lem:lclt-first} that
\begin{align*}
\sup_{\bx \in \cL_n} \left| n^{3d/2}  p_n(\bx) - \nu \left(\bx - \frac{(n+1)}{2n^{1/2}} \bmu \right) \right| 
&\le \sum_{k=1}^{4} I_k(n) + \int_{\R^d \setminus R_1} \exp \left\{ -\frac{\lambda_{\rm min} (M)}{6} \| \bt \|^2 \right\} \ud \bt.
\end{align*}
Fix $\eps >0$. Then we can choose $\ell$ sufficiently large such that the integral in the above display is less than $\eps$,
and, by Lemma~\ref{lem:parts1}, also $I_2(n) \le \eps$ for all $n$; fix such an $\ell$.
Then Lemma~\ref{lem:parts1}  shows that
$I_1 (n) + I_3 (n) + I_4 (n) \to 0$ as $n \to \infty$. Since $\eps >0$ was arbitrary, the proof of the theorem is completed.
\end{proof}

\section{One dimension}
\label{sec:one-dimension}

We start with a couple of general observations. Recall that an event defined
in terms of a sequence of random variables $X_1, X_2, \ldots$
is \emph{permutable}
if its occurrence is a.s.~invariant under any finite permutation of $X_1, X_2, \ldots$:
see~\cite[p.~232]{ct} for a formal definition.

\begin{lemma}
\label{lem:exchangeable}
Let $d=1$.
For any $x \in \R$, the event $\left\{ \limsup_{n \to \infty} G_n \ge x \right\}$ is permutable.
\end{lemma}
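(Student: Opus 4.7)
The plan is to verify the definition of an exchangeable event directly. Recall that $A \in \sigma(X_1, X_2, \ldots)$ is exchangeable if, for every finite permutation $\sigma: \N \to \N$ (i.e., $\sigma$ fixing all but finitely many indices), the event $A$ coincides with its image under the map $(X_1, X_2, \ldots) \mapsto (X_{\sigma(1)}, X_{\sigma(2)}, \ldots)$. Fix $x \in \R$ and such a permutation $\sigma$, and let $N$ be large enough that $\sigma(i) = i$ for all $i > N$. Let $G_n'$ denote the centre-of-mass process built from the permuted sequence $(X_{\sigma(i)})_{i \geq 1}$. To show that $\{\limsup_{n \to \infty} G_n \geq x\}$ is exchangeable, it suffices to prove that $G_n - G_n' \to 0$ surely as $n \to \infty$, for then $\limsup_{n} G_n = \limsup_{n} G_n'$ pointwise on the sample space.

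The computation uses the representation~\eqref{eq:weighted-sum}. For $n \geq N$, since $\sigma$ is the identity on indices $> N$,
\begin{equation*}
G_n - G_n' = \frac{1}{n} \sum_{i=1}^N (n - i + 1)\bigl( X_i - X_{\sigma(i)} \bigr).
\end{equation*}
Split $(n - i + 1) = n - (i-1)$. The $n$-contribution is
\[
\sum_{i=1}^N \bigl( X_i - X_{\sigma(i)} \bigr) = \sum_{i=1}^N X_i - \sum_{i=1}^N X_{\sigma(i)} = 0,
\]
because $\sigma$ merely permutes $\{1,\ldots,N\}$. Hence
\begin{equation*}
G_n - G_n' = -\frac{1}{n} \sum_{i=1}^N (i-1)\bigl( X_i - X_{\sigma(i)} \bigr),
\end{equation*}
which is bounded in absolute value by $\frac{2 N^2}{n} \max_{1 \leq i \leq N} |X_i|$ and thus tends to $0$ on every sample point as $n \to \infty$.

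There is no real obstacle in this argument, only one point to notice that might trip up a too-hasty estimate: the individual weights $(n-i+1)/n$ remain of order $1$ as $n \to \infty$, so the naive bound $|G_n - G_n'| \leq n^{-1}\sum_{i=1}^N (n-i+1)|X_i - X_{\sigma(i)}|$ is only $O(1)$ and does not vanish. The cancellation $\sum_{i=1}^N (X_i - X_{\sigma(i)}) = 0$, which holds because $\sigma$ is a permutation, is exactly what is needed to convert the $O(1)$ prefactor into the $O(1/n)$ prefactor required for the conclusion.
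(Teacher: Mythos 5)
Your proof is correct, and while the overall idea is the same as the paper's (the permutation-dependent contribution to $G_n$ is asymptotically negligible), the mechanics are different in a way worth noting. The paper works with the form $G_n = \frac{1}{n}\sum_{i=1}^n S_i$ directly: it splits off $\frac{1}{n}(S_1+\cdots+S_k)$, which vanishes as $n\to\infty$, and observes that the remainder $\frac{1}{n}(S_{k+1}+\cdots+S_n)$ depends on $X_1,\ldots,X_k$ only through their sum $S_k$, hence is manifestly invariant under permutations of the first $k$ coordinates. You instead use the weighted-increment representation~\eqref{eq:weighted-sum}, introduce the permuted process $G_n'$ explicitly, and compute $G_n - G_n'$ directly; the cancellation of the leading (order-one) term via $\sum_{i=1}^N (X_i - X_{\sigma(i)}) = 0$ is the exact analogue of the paper's observation that $S_k$ is permutation-invariant. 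Your version has the small advantage of making the pathwise convergence $G_n - G_n' \to 0$ (surely, not merely almost surely) completely explicit, and it highlights precisely why a naive bound fails and the cancellation is needed; the paper's version is slightly more compact because it avoids introducing $G_n'$ as a separate object. Both are sound.
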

\begin{proof}
For any $x \in \R$, we notice that for any positive integer $k$,
\begin{align}
\label{eq:exc}
\left\{ \limsup_{n \to \infty} G_n \ge x \right\} &= \left\{ \limsup_{n \to \infty} \left[ \frac{1}{n}(S_1 +S_2 + \cdots + S_k) + \frac{1}{n}(S_{k+1} +S_{k+2} + \cdots + S_n) \right] \ge x \right\} \nonumber\\
&= \left\{ \limsup_{n \to \infty}  \frac{1}{n}(S_{k+1} +S_{k+2} + \cdots + S_n) \ge x \right\},
\end{align}
up to events of probability $0$, since $\lim_{n \to \infty} \frac{1}{n} (S_1 + \cdots + S_k ) = 0$, a.s.
But the event on the right-hand side of~\eqref{eq:exc} is invariant under permutations of $X_1, X_2, \ldots, X_k$.
\end{proof}

\begin{lemma}
\label{lem:4case}
Let $d=1$. One and only one of the following will occur with probability $1$.
\begin{itemize}
\item[(i)] $G_n=0$ for all $n$.
\item[(ii)] $G_n \to \infty$.
\item[(iii)] $G_n \to -\infty$.
\item[(iv)] $-\infty = \liminf_{n\to\infty} G_n < \limsup_{n\to\infty} G_n = \infty$.
\end{itemize} 
\end{lemma}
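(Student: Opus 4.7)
The plan is to combine the Hewitt--Savage zero--one law with a shift argument. By Lemma~\ref{lem:exchangeable}, for each $x \in \R$ the event $\{\limsup_n G_n \geq x\}$ is exchangeable, hence has probability $0$ or $1$ by Hewitt--Savage. Thus $c^+ := \limsup_n G_n$ is a.s.~equal to some constant in $[-\infty, +\infty]$. Applying the same reasoning to the centre-of-mass process of $(-X_i)$ shows that $c^- := \liminf_n G_n$ is likewise a.s.~a constant, and $c^- \leq c^+$.

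The crux is to show that if $c^+$ is finite then $X \equiv 0$ a.s., which is case~(i). Starting from $G_n = \sum_{j=1}^n \frac{n-j+1}{n} X_j$ and splitting off the $j=1$ contribution gives the self-similarity identity
\[ G_n \;=\; X_1 + \frac{n-1}{n}\, G'_{n-1}, \qquad n \geq 1, \]
where $G'_m$ denotes the centre-of-mass process built from the shifted i.i.d.~sequence $(X_2, X_3, \ldots)$ (with $G'_0 := 0$). Since $(X_2, X_3, \ldots) \eqd (X_1, X_2, \ldots)$, one has $(G'_m) \eqd (G_m)$, and hence $\limsup_m G'_m = c^+$ a.s. I would then argue that $\limsup_n \frac{n-1}{n} G'_{n-1} = c^+$ a.s.~as well; taking $\limsup$ in the displayed identity then gives $c^+ = X_1 + c^+$ a.s., forcing $X_1 = 0$ a.s.~and hence $X \equiv 0$. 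A symmetric argument (or applying the above to $-X$) shows that finiteness of $c^-$ also forces $X \equiv 0$.

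Putting these together: if $\Pr(X = 0) < 1$, then both $c^+$ and $c^-$ lie in $\{-\infty, +\infty\}$, and combined with $c^- \leq c^+$ this leaves the three cases $c^- = c^+ = +\infty$ (case (ii)), $c^- = c^+ = -\infty$ (case (iii)), and $c^- = -\infty < +\infty = c^+$ (case (iv)). If on the other hand $X = 0$ a.s., then $G_n = 0$ for all $n$, which is case~(i). The four cases are evidently mutually exclusive.

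The one subtle point is the identity $\limsup_n \frac{n-1}{n} G'_{n-1} = c^+$: since $G'_{n-1}$ may be unbounded below, the naive bound $| \frac{n-1}{n} G'_{n-1} - G'_{n-1} | = |G'_{n-1}|/n$ need not vanish, so one has to argue separately on $\{G'_{n-1} \geq 0\}$ (where $\frac{n-1}{n} G'_{n-1} \leq G'_{n-1}$) and on $\{G'_{n-1} < 0\}$ (where finiteness of $c^+$ gives $G'_{n-1} \leq c^+ + \varepsilon$ eventually and hence $\frac{n-1}{n} G'_{n-1} \leq (c^+ + \varepsilon)\frac{n-1}{n} \leq c^+ + 2\varepsilon$ for $n$ large); the reverse inequality $\limsup \geq c^+$ follows by evaluating along a subsequence on which $G'_{n-1} \to c^+$.
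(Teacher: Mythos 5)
Your proof is correct and follows essentially the same route as the paper: Hewitt--Savage gives a.s.~constant $\limsup$ and $\liminf$, and the shift identity $G_n = X_1 + \tfrac{n-1}{n}G'_{n-1}$ (the paper writes the equivalent $\tfrac{n}{n+1}G'_n = G_{n+1}-X_1$ with $(G'_n)\eqd(G_n)$) forces $X_1=0$ a.s.~whenever one of these is finite. You are in fact slightly more careful than the paper on the step where the scaling factor is passed through the $\limsup$ — the paper says only "taking $n\to\infty$"; your observation that one needs $\limsup_n \tfrac{n-1}{n}G'_{n-1} = c^+$ (and a short argument for it) is the right detail to flag, although the case split can be avoided by simply noting that $G'_{n-1}\leq c^+ + \varepsilon$ eventually implies $\tfrac{n-1}{n}G'_{n-1}\leq\tfrac{n-1}{n}(c^+ + \varepsilon)\to c^+ +\varepsilon$.
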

\begin{proof}
We adapt the proof of Theorem~4.1.2 in \cite{RD}.
Lemma~\ref{lem:exchangeable} and the Hewitt--Savage zero--one law (see e.g.~\cite[p.~238]{ct}) imply $\limsup_{n\to\infty} G_n = \lambda$, a.s.,
for some  $\lambda \in [-\infty, \infty]$.
Let $G'_n := \frac{n+1}{n}(G_{n+1}-X_1) = \sum_{i=1}^{n}\frac{n-i+1}{n}X_{i+1}$. Recalling~\eqref{eq:weighted-sum}, we see the sequence $(G'_n)$ has the same distribution as $(G_n)$. 
So taking $n \to \infty$ in $\frac{n}{n+1} G'_n = G_{n+1}-X_1$ we obtain
$\lambda=\lambda-X_1$, a.s., implying $X_1=0$ a.s.~if $\lambda$ is finite, which is case (i). 
Otherwise, $\lambda = -\infty$ or $+\infty$. A similar argument applies to $\liminf_{n\to\infty} G_n$. 
The 3 possible combinations ($\limsup_{n\to \infty} G_n = -\infty$ and $\liminf_{n\to\infty} G_n = \infty$ being impossible) give (ii), (iii), and (iv).
\end{proof}

Clearly cases (ii)~and (iii) of Lemma~\ref{lem:4case} are transient; case~(iv),
when the walk oscillates, is the most interesting case. The next result shows that
oscillating behaviour is enough to ensure recurrence provided that $\E X =0$.

\begin{lemma}
\label{lem:change_sign}
Suppose that $d=1$ and $\E  X =0$. Suppose that 
$\limsup_{n \to \infty} G_{n}= + \infty$ and $\liminf_{n \to \infty} G_{n}= - \infty$.
Then, for any $x \in \R$, $\liminf_{n \to \infty}|G_n - x|=0$, a.s.
\end{lemma}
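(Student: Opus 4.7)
The approach is to verify that $G_n$ has asymptotically vanishing increments and then apply a discrete intermediate value argument: a sequence that oscillates between $-\infty$ and $+\infty$ but whose consecutive differences tend to $0$ must visit every neighbourhood of every real number infinitely often.

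First I would compute the one-step increment. From $(n+1)G_{n+1} = nG_n + S_{n+1}$ one gets
\[ G_{n+1} - G_n = \frac{S_{n+1} - G_n}{n+1} . \]
Under the standing hypothesis $\E X = 0$ (which in particular requires $\E |X| < \infty$), the strong law of large numbers gives $S_{n+1}/(n+1) \to 0$ a.s., while Proposition~\ref{prop:LLN} gives $G_n/n \to \frac{1}{2}\bmu = 0$ a.s. Combining,
\[ |G_{n+1} - G_n| \;\leq\; \frac{|S_{n+1}|}{n+1} + \frac{|G_n|}{n+1} \;\longrightarrow\; 0, \quad \text{a.s.} \]

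Next, I would fix (deterministically) a realisation in the full-probability event on which $\limsup G_n = +\infty$, $\liminf G_n = -\infty$, and $|G_{n+1} - G_n| \to 0$. Fix $x \in \R$ and $\eps > 0$. Choose $N$ so that $|G_{n+1} - G_n| < \eps$ for all $n \geq N$. By the oscillation there exist $N \leq n_1 < n_2$ with $G_{n_1} > x$ and $G_{n_2} < x$. Let $m$ be the largest index in $[n_1, n_2)$ with $G_m \geq x$; then $G_{m+1} < x$ and hence
\[ 0 \leq G_m - x \leq G_m - G_{m+1} < \eps, \]
so $|G_m - x| < \eps$. Since $\eps > 0$ was arbitrary and we can always restart the argument beyond any finite index (the oscillation persists), this shows $\liminf_{n \to \infty} |G_n - x| = 0$ on the chosen full-probability event, which yields the result.

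The only thing to take care of is that all three almost-sure events (the SLLN for $S_n$, the law of large numbers for $G_n$, and the oscillation hypothesis) must be intersected to produce a single full-probability event on which the deterministic argument runs for every $x \in \R$ simultaneously; this is automatic because the deterministic conclusion holds for all real $x$ on the intersection. There is no serious obstacle here: once the increments are shown to vanish, the result follows by the standard discrete intermediate-value observation.
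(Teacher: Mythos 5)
Your proposal is correct and follows essentially the same route as the paper: compute $G_{n+1}-G_n = (S_{n+1}-G_n)/(n+1)$, use the SLLN together with Proposition~\ref{prop:LLN} to conclude the increments vanish a.s., and then apply a discrete intermediate-value argument to the oscillating sequence. The paper phrases the crossing step as ``$G_n - x$ and $G_{n+1}-x$ have opposite signs infinitely often'' rather than explicitly isolating the last index $m$ before a downcrossing, but the content is identical.
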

\begin{proof}
Fix $\eps >0$. 
Since $S_n /n \to 0$ a.s.~and, by Proposition~\ref{prop:LLN}, $G_n /n \to 0$ a.s., we have
\[
G_{n+1}-G_{n} = \frac{S_{n+1}-G_n}{n+1} \to 0 , \as 
\]
Hence $| G_{n+1} - G_n | < \eps$ for all but finitely many $n$.
For any $x \in \R$, $\limsup_{n \to \infty} G_{n}= + \infty$ and $\liminf_{n \to \infty} G_{n}= - \infty$
implies that there are infinitely many $n$ for which $G_n - x$ and $G_{n+1} -x$ have opposite signs.
Hence $| G_n - x | < \eps$ i.o.
\end{proof}

The next result shows that $G_n$ does oscillate when~\eqref{ass:basicd} holds.  

\begin{lemma}
\label{lem:change_sign2}
Suppose that $d=1$, that~$\E [ X^2 ] \in (0,\infty)$, and that $\E X = 0$.
 Then $\limsup_{n \to \infty} G_{n}= + \infty$ and $\liminf_{n \to \infty} G_{n}= - \infty$.
\end{lemma}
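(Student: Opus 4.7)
The plan is to reduce immediately to Lemma~\ref{lem:4case}, which under our hypotheses says that exactly one of four scenarios occurs almost surely, and then rule out the three that conflict with the desired oscillation.

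Case (i) of Lemma~\ref{lem:4case}, namely $G_n \equiv 0$, would force $X_1 = 0$ almost surely, which contradicts assumption~\eqref{ass:basicd} because $M > 0$ requires a non-degenerate distribution for $X$. So (i) is ruled out directly from the hypothesis.

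The main work, such as it is, lies in eliminating cases (ii) and (iii); these are easily handled with Proposition~\ref{prop:CLT}. If $G_n \to +\infty$ almost surely, then for every fixed $K$ we have $\Pr(G_n \le K) \to 0$ as $n \to \infty$; in particular $\Pr(G_n \le 0) \to 0$. On the other hand, under $\bmu = 0$ and~\eqref{ass:basicd}, Proposition~\ref{prop:CLT} yields $n^{-1/2} G_n \tod \cN_1(0, M/3)$, a non-degenerate centred Gaussian because $M > 0$. Since $0$ is a continuity point of the limiting distribution function,
\[
\Pr(G_n \le 0) = \Pr(n^{-1/2} G_n \le 0) \longrightarrow \tfrac{1}{2},
\]
contradicting $\Pr(G_n \le 0) \to 0$. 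Hence case (ii) cannot occur; applying the same argument to $-G_n$ (or noting the symmetry of the limiting Gaussian) rules out case (iii) as well.

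Since cases (i), (ii), and (iii) are all excluded, Lemma~\ref{lem:4case} forces case (iv), which is precisely the statement $\limsup_{n\to\infty} G_n = +\infty$ and $\liminf_{n\to\infty} G_n = -\infty$. I do not anticipate any genuine obstacle: the distributional CLT of Proposition~\ref{prop:CLT} is exactly strong enough to prevent almost sure divergence to either infinity, and the dichotomy of Lemma~\ref{lem:4case} does all the remaining work.
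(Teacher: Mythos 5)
Your proof is correct, but the route is slightly different from the paper's. The paper proves the lemma directly from Proposition~\ref{prop:CLT} together with Lemma~\ref{lem:exchangeable} and the Hewitt--Savage zero--one law: it shows $\Pr(\limsup_{n\to\infty} G_n \ge x) \ge \lim_{m\to\infty}\Pr(G_m \ge x) = 1/2$ for every $x$, invokes Hewitt--Savage to upgrade this to probability one, and lets $x\to\infty$; the $\liminf$ is handled symmetrically. You instead pass through the four-way dichotomy of Lemma~\ref{lem:4case} (itself a packaged form of the Hewitt--Savage argument) and then use Proposition~\ref{prop:CLT} as a \emph{contradiction} device to knock out the monotone escape cases (ii) and (iii), while non-degeneracy of $M$ kills case (i). Both arguments rest on exactly the same two ingredients (the CLT and Hewitt--Savage), so the content is equivalent, but your version is a slightly higher-level deduction and settles the $\limsup$ and $\liminf$ simultaneously rather than one at a time. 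One small point worth making explicit when you write this up: the implication ``$G_n\to+\infty$ a.s.\ $\Rightarrow$ $\Pr(G_n\le 0)\to 0$'' is via bounded convergence, which is where almost sure convergence to $+\infty$ actually gets used; as stated it reads as if it were automatic. With that spelled out, the proof is complete.
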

\begin{proof}
For any $x \in \R$, we have that
\begin{align*}
\Pr \left(\limsup_{n \to \infty} G_n \ge x \right) &\ge \Pr \left( G_n \ge x \text{ i.o.} \right) \\
&= \Pr \left( \bigcap_{m=1}^\infty \bigcup_{n \ge m} \{G_n \ge x\} \right) \\
&= \lim_{m \to \infty} \Pr \left( \bigcup_{n \ge m} \{G_n \ge x\} \right) \\
&\ge  \lim_{m \to \infty} \Pr \left( G_m \ge x \right) \\
&= \frac{1}{2},
\end{align*}
by the central limit theorem, Proposition~\ref{prop:CLT}. 
With Lemma~\ref{lem:exchangeable} and the Hewitt--Savage zero--one law (see e.g.~\cite[p.~238]{ct}), it follows that
 $\limsup_{n \to \infty} G_n \ge x$, a.s., 
and since $x \in \R$ was arbitrary, we get
 $\limsup_{n \to \infty} G_n= +\infty$. A similar argument gives 
$\liminf_{n \to \infty} G_n= -\infty$.  
\end{proof}
 
\begin{proof}[Proof of Theorem~\ref{thm:classification}.]
Under the conditions in part~(i) of the theorem, the process $(G_n)$ has the same distribution as the process $(-G_n)$,
and so we must be in either case (i) or (iv) of 
 Lemma~\ref{lem:4case}. The trivial case (i) is ruled out since $\E | X | >0$.  Thus case (iv) applies, and
 $G_n$ changes sign i.o., so by Lemma~\ref{lem:change_sign} we obtain the desired conclusion.

Under the conditions in part~(ii), Lemma~\ref{lem:change_sign2} applies, so  Lemma~\ref{lem:4case}(iv) applies again, and the same argument
gives the result.
\end{proof}

For the remainder of this section we work towards a proof of Theorem~\ref{thm:stable-transience}.
 The proof rests on the following local limit theorem. We use the notation
\[ \cL_n := \left\{ n^{-1-1/\alpha} \left( \tfrac{1}{2} n (n+1) b + h \Z \right) \right\} ,\]
and $p_n (x) := \Pr ( G_n = n^{1/\alpha} x )$.

\begin{theorem}
\label{thm:SLCLT}
Suppose that~$d=1$ and~\eqref{ass:basicd2} holds, i.e., $\Pr(X \in b + h \Z)=1$ for $b \in \R$ and $h >0$ maximal. 
Suppose also that~\eqref{ass:stable} holds.  Then  
\begin{equation}
\lim_{n \to \infty} \sup_{x \in \cL_n} \left| \frac{n^{1+1/\alpha}}{h} p_n(x)-(\alpha+1)^{1/\alpha} g\left((\alpha+1)^{1/\alpha}x\right) \right| = 0, 
\label{eq:slclt}
\end{equation}
where $g(x)$ is the density of the stable distribution in~\eqref{ass:stable}.
\end{theorem}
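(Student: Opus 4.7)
The plan is to follow the Fourier-analytic template of Theorem~\ref{thm:LCLT}. Under~\eqref{ass:stable} the characteristic function $\varphi$ is real and even, and there is $c_0 > 0$ with $\varphi(s) = 1 - c_0|s|^\alpha + o(|s|^\alpha)$ as $s \to 0$, while the limiting stable density $g$ has characteristic function $\re^{-c_0|t|^\alpha}$. Since $Y_n := nG_n$ takes values in the lattice $\tfrac12 n(n+1)b + h\Z$ and $Y_n \eqd \sum_{j=1}^n j X_j$, the lattice Fourier inversion formula followed by the substitution $u = n^{-1-1/\alpha}t$ gives, for $x \in \cL_n$,
\[
\frac{n^{1+1/\alpha}}{h}\,p_n(x) = \frac{1}{2\pi}\int_{R(n)} \re^{-itx} \Phi_n(t)\,\ud t, \quad \Phi_n(t) := \prod_{j=1}^n \varphi\bigl(n^{-1-1/\alpha} j t\bigr),
\]
where $R(n) := [-\pi n^{1+1/\alpha}/h, \pi n^{1+1/\alpha}/h]$. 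The target density $(\alpha+1)^{1/\alpha}g((\alpha+1)^{1/\alpha}x)$ has characteristic function $f(t) := \exp\{-c_0|t|^\alpha/(\alpha+1)\}$, which is integrable, so subtracting its Fourier inversion reduces~\eqref{eq:slclt} to showing $\int_{R(n)}|\Phi_n(t)-f(t)|\ud t \to 0$ together with the trivial $\int_{R^\rc(n)} f(t)\,\ud t \to 0$.

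The crux of the argument is to establish the uniform global bound
\[
|\varphi(s)| \le 1 - c|s|^\alpha \le \exp\{-c|s|^\alpha\} \quad \text{for all } s \in [-\pi/h, \pi/h],
\]
for some $c \in (0, c_0)$. For $|s|$ small this follows directly from the stable expansion; for $|s|$ bounded away from $0$ in this compact interval, maximality of $h$ combined with Lemma~\ref{lem:equivalent} yields $|\varphi(s)| \le 1 - \eta$ for some $\eta > 0$, and the trivial inequality $1 - \eta \le 1 - \eta(|s|/(\pi/h))^\alpha$ (valid for $|s| \le \pi/h$) converts this into the desired shape. Since $s_j := n^{-1-1/\alpha}jt$ satisfies $|s_j| \le \pi/h$ whenever $t \in R(n)$, this uniform bound together with $\sum_{j=1}^n j^\alpha \ge n^{\alpha+1}/(\alpha+1)$ produces
\[
|\Phi_n(t)| \le \exp\Bigl\{ -c\, n^{-\alpha-1}|t|^\alpha \sum_{j=1}^n j^\alpha \Bigr\} \le \exp\bigl\{ -c'|t|^\alpha \bigr\},
\]
valid uniformly over $t \in R(n)$, with $c' := c/(\alpha+1) > 0$ independent of $n$.

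With this integrable envelope in place, fix a large $A > 0$. On the compact region $[-A,A]$ the arguments $s_j$ tend to $0$ uniformly in $j \in \{1,\ldots,n\}$, so the refined expansion $\log\varphi(s) = -c_0|s|^\alpha(1 + o(1))$ and the Riemann-sum asymptotic $n^{-\alpha-1}\sum_{j=1}^n j^\alpha \to 1/(\alpha+1)$ combine to give $\Phi_n(t) \to f(t)$ uniformly on $[-A,A]$, hence $\int_{-A}^A |\Phi_n - f|\,\ud t \to 0$. Outside $[-A,A]$ the envelope together with the analogous tail of $f$ controls the difference by $2\int_{|t|>A}(\re^{-c'|t|^\alpha} + f(t))\,\ud t$, which is arbitrarily small for large $A$. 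Adding up the pieces finishes the proof. I expect the main obstacle to be the uniform upper bound for $|\varphi|$ on $[-\pi/h, \pi/h]$: it is a quantitative refinement of the strict inequality statement in Lemma~\ref{lem:equivalent}, tailored to match the specific $|s|^\alpha$ decay of $1 - \varphi(s)$ near the origin; once this envelope is secured the Fourier-inversion argument collapses to a single region and is much shorter than the four-region partition needed in the proof of Theorem~\ref{thm:LCLT}.
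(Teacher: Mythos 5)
Your reduction to an envelope bound on a single region has a fatal error in the key step: the claim that $|s_j| = n^{-1-1/\alpha}j|t| \le \pi/h$ for all $j\le n$ and all $t\in R(n)$ is false. Taking $j=n$ gives $|s_n| = n^{-1/\alpha}|t|$, which for $|t|$ near the endpoint $\pi n^{1+1/\alpha}/h$ of $R(n)$ is of order $\pi n/h$, not $\pi/h$. The arguments only stay inside $[-\pi/h,\pi/h]$ when $|t|\le \pi n^{1/\alpha}/h$, i.e.\ on the inner portion of $R(n)$. Since $|\varphi|$ is $2\pi/h$-periodic and equals $1$ at every multiple of $2\pi/h$, no pointwise bound of the form $|\varphi(s)|\le \re^{-c|s|^\alpha}$ can extend beyond one period, so the envelope $|\Phi_n(t)|\le \re^{-c'|t|^\alpha}$ is simply not available once $|t| > \pi n^{1/\alpha}/h$. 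Your proof collapses precisely on the range corresponding to the paper's $J_4(n)$.

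Up to that point your plan is sound and agrees with the paper: Fourier inversion, stable expansion $\log\varphi(s)=-c|s|^\alpha(1+o(1))$, Riemann-sum asymptotics $n^{-\alpha-1}\sum_{j\le n}j^\alpha\to 1/(\alpha+1)$, uniform convergence $\Phi_n\to f$ on compacts, and the patch of Lemma~\ref{lem:equivalent} to a $1-c|s|^\alpha$-type bound on $[\delta,\pi/h]$ are all correct and do streamline the paper's treatment of the inner three regions. But for $\pi n^{1/\alpha}/h < |t| \le \pi n^{1+1/\alpha}/h$ the points $s_j$ sweep across many periods of $|\varphi|$ and can repeatedly land near the lattice $S_H=(2\pi/h)\Z$ where $|\varphi|=1$. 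The paper handles this with a counting argument (adapted from Lemma~4.4 of Dobrushin--Hryniv): one shows that at least a constant fraction of the $n$ points $s_j$ lie outside the $\rho$-neighbourhood of $S_H$, whence $|\Phi_n(t)|\le \re^{-\eps c_\rho n}$ uniformly over that range, which after multiplying by the length $O(n^{1+1/\alpha})$ still tends to $0$. You would need this (or an equivalent equidistribution-style device) to close the gap; it cannot be replaced by a pointwise bound on $|\varphi|$.
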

\begin{proof} 
The proof is similar to that of~Theorem~\ref{thm:LCLT}, and can also be compared 
to the proof of the local limit theorem for sums of i.i.d.~random variables
in the domain of attraction of a stable law: see \cite[\S 4.2]{IL}.

Assumption~\eqref{ass:stable} implies that $n^{-1/\alpha} S_n$ converges
in distribution to a (constant multiple of) a random variable with characteristic function
$s (t) = \re^{-c |t|^\alpha}$, where $c >0$ and $\alpha \in (0,1)$; see Theorems~2.2.2 and~2.6.7 of~\cite{IL}. It also follows, by an examination of the statements of Theorems~2.6.1 and~2.6.7 of~\cite{IL} and the proof of Theorem~2.6.5 of~\cite{IL}, that for $t$ in a neighbourhood of $0$,
\begin{equation}
\label{domain}
 \log \varphi (t) = - c | t |^\alpha \left( 1 + \eps (t) \right) ,\end{equation}
where $| \eps (t) | \to 0$ as $t \to 0$.
 
Define $Y_n = \sum_{i=1}^n S_i$ and let
\[ \Phi_n(t) := \E \re^{in^{-1-1/\alpha}t Y_n}. \]
Using the $d=1$ case of the inversion formula~\eqref{eq:formula} with $W=\left( Y_n -\frac{n(n+1)}{2}b \right)/h \in \Z$, we get  
\begin{align*}
p_n(x )  = 
\frac{1}{2\pi } 
\int_{-\pi}^\pi \re^{ -\frac{iu}{h} \left( n^{1+1/\alpha} x -\frac{n(n+1)}{2}b \right)  } \E\left[ \re^{\frac{iu}{h} \left( Y_n -\frac{n(n+1)}{2}b \right)  }\right] \ud u, \text{ for } x \in \cL_n.
\end{align*}
Using the substitution $t = u n^{1+1/\alpha}/h$, we obtain
\begin{equation}
\frac{n^{1+1/\alpha}}{h} p_n(x ) = \frac{1}{2\pi} \int_{-\pi n^{1+1/\alpha}/h}^{\pi n^{1+1/\alpha}/h} \re^{-itx} \Phi_n(t)\ud t. 
\label{cal51}
\end{equation}
On the other hand, 
from the inversion formula for densities we have that
\begin{equation}
\label{eq:stable-g}
g (x )= \frac{1}{2\pi} \int_{-\infty}^{\infty} \re^{-itx}s (t )\ud t,
\end{equation}
where $g$ is the density corresponding to $s$. It follows that
\begin{align*}
(\alpha+1)^{1/\alpha} g\left((\alpha+1)^{1/\alpha}x\right) &=\frac{1}{2\pi} \int_{-\infty}^{\infty} (\alpha+1)^{1/\alpha} \re^{-it(\alpha+1)^{1/\alpha} x}s\left(t\right)\ud t \\
&=\frac{1}{2\pi} \int_{-\infty}^{\infty} \re^{-iux}s\left(\frac{u}{(\alpha+1)^{1/\alpha}}\right)\ud u,
\end{align*}
using the substitution $u = (\alpha+1)^{1/\alpha} t$.
Since $s(t)=\re^{-c|t|^\alpha}$, we get
\begin{equation}
(\alpha+1)^{1/\alpha} g\left((\alpha+1)^{1/\alpha}x\right)=\frac{1}{2\pi} \int_{-\infty}^{\infty} \re^{-itx - \frac{c|t|^\alpha}{\alpha+1}}\ud t.
\label{cal52}
\end{equation}
Subtracting equation~\eqref{cal52} from equation~\eqref{cal51}  we obtain
\begin{equation*}
\sup_{x \in \cL_n}
\left| \frac{n^{1+1/\alpha}}{h} p_n(x ) - (\alpha+1)^{1/\alpha} g\left((\alpha+1)^{1/\alpha}x\right) \right| \le \sum_{k=1}^4 J_k (n) + J_5,
\end{equation*}
where 
\begin{align*}
J_1(n) &:= \int_{-\ell}^\ell \left| \Phi_n(t) -  \re^{- \frac{c|t|^\alpha}{\alpha+1}} \right| \ud t \\
J_2(n) &:= \int_{\ell \le |t| \le \delta n^{1/\alpha}} \left| \Phi_n(t) \right| \ud t \\
J_3(n) &:= \int_{\delta n^{1/\alpha} \le |t| \le \pi n^{1/\alpha}/h} \left| \Phi_n(t) \right| \ud t \\
J_4(n) &:= \int_{\pi n^{1/\alpha}/h \le |t| \le \pi n^{1+1/\alpha}/h} \left| \Phi_n(t) \right| \ud t \\
J_5 &:= \int_{|t| > \ell} \left| \re^{- \frac{c|t|^\alpha}{\alpha+1}} \right| \ud t \\
\end{align*}
for some constants $\ell$ and $\delta$ to be determined later. 
The statement of the theorem will follow once we show that 
\[ \lim_{\ell \to \infty} \limsup_{n \to \infty} \left( \sum_{k=1}^4 J_k (n) + J_5 \right) = 0 .\]
Thus it remains to establish this fact.

Since $Y_n$ has the same distribution as $\sum_{j=1}^n jX_j$, we get
\begin{align}
\log \Phi_n(t) & = \log \prod_{j=1}^{n} \varphi\left( \frac{jt}{n^{1+1/\alpha}} \right) 
= \sum_{j=1}^n \log \varphi\left( \frac{jt}{n^{1+1/\alpha}} \right) \nonumber\\
& = - \frac{c | t|^\alpha}{n^{\alpha+1}} \sum_{j=1}^n j^\alpha  \left( 1 + \eps\left(\frac{jt}{n^{1+1/\alpha}} \right) \right),
\label{cal54}
\end{align}
using~\eqref{domain}. Since $|\eps(t)|\to0$ as $t \to 0$, we have
\begin{equation}
\label{epsilon-bound}
\lim_{n \to \infty} \sup_{t \in [-\ell, \ell]} \max_{j \in \{1,2, \ldots , n\}} \left| \eps\left(\frac{jt}{n^{1+1/\alpha}} \right) \right| =0.
\end{equation}
A simple consequence of the fact that $\sum_{k=0}^{n-1} k^\alpha \le \int_0^n u^\alpha \ud u  \le \sum_{k=1}^{n} k^\alpha$ for $\alpha > 0$ is
\begin{equation}
\label{sum-alpha}
\sum_{j=1}^n j^\alpha = \frac{n^{\alpha+1}}{\alpha+1} + O (n^\alpha ) .
\end{equation}
It follows from~\eqref{cal54}, \eqref{epsilon-bound}
and~\eqref{sum-alpha} that
\begin{equation}
\label{eq:66}
 \log \Phi_n (t) =  - \frac{c |t |^\alpha}{\alpha+1}  + \Delta (n,t)   ,\end{equation}
where $\sup_{t \in [-\ell,\ell]} | \Delta (n,t) | \to 0$ as $n \to \infty$.
It follows that $\lim_{n \to \infty} J_1 (n)=0$ for any $\ell \in (0,\infty)$.

For $J_2(n)$,  we see that 
\begin{equation*}
\lim_{\delta \to 0} \sup_n \sup_{t \in [-\delta n^{1/\alpha}, \delta n^{1/\alpha}]} \max_{j \in \{1,2, \ldots , n\}} \left| \eps\left(\frac{jt}{n^{1+1/\alpha}} \right) \right| =0.
\end{equation*}
So by~\eqref{cal54} and~\eqref{sum-alpha} we have that~\eqref{eq:66}
holds for $t \in [-\delta n^{1/\alpha}, \delta n^{1/\alpha}]$ where, 
choosing $\delta$ sufficiently small, we have that for all $n$ sufficiently large and
all $t \in [-\delta n^{1/\alpha}, \delta n^{1/\alpha}]$,
$| \Delta (n,t) | \leq \frac{1}{2} \frac{c |t|^\alpha}{\alpha+1}$.
Hence for sufficiently large $n$, for all $t \in [-\delta n^{1/\alpha}, \delta n^{1/\alpha}]$,
\begin{equation*}
| \Phi_n(t) | \le \exp \left\{ - \frac{1}{2} \frac{c |t|^\alpha}{\alpha+1} \right\} .
\end{equation*}
It follows that, for all $n$ sufficiently large,
\begin{equation*}
\sup_n J_2(n) \le  \int_{ |t| \ge \ell} \re^{-\frac{1}{2} \frac{c |t|^\alpha}{\alpha+1}} \ud t ,
\end{equation*}
which tends to $0$ as $\ell \to \infty$. 

Next we consider $J_3(n)$. First observe that
\begin{equation*}
\left| \Phi_n(t) \right| = \prod_{j=1}^{n} \left| \varphi\left( \frac{jt}{n^{1+1/\alpha}} \right) \right| \le \prod_{j=\lceil n/2 \rceil }^{n} \left| \varphi\left( \frac{jt}{n^{1+1/\alpha}} \right) \right|.
\end{equation*}
Now for any $\delta n^{1/\alpha} \le |t| \le \pi n^{1/\alpha}/h $ and any $\lceil n/2 \rceil \le j \le n$, we have 
\begin{equation*}
\frac{\delta}{2} \le \left| \frac{jt}{n^{1+1/\alpha}} \right| \le  \frac{\pi}{h}.
\end{equation*}
We can take  $\rho$ sufficiently small so that 
\begin{equation*}
\rho < \frac{\delta}{2} \le \left| \frac{jt}{n^{1+1/\alpha}} \right| \le  \frac{\pi}{h} < \frac{2\pi}{h}- \rho.
\end{equation*}
So an application of the $d=1$ case of Lemma~\ref{lem:equivalent} gives, for all $n$,
\begin{equation*}
\sup_{\delta n^{1/\alpha} \le |t| \le \pi n^{1/\alpha}/h} \sup_{\lceil n/2 \rceil \le j \le n} \left| \varphi\left( \frac{jt}{n^{1+1/\alpha}} \right) \right| \le \re^{-c_\rho},
\end{equation*}
for some $c_\rho >0$. Hence we have  
\begin{equation*}
\sup_{\delta n^{1/\alpha} \le |t| \le \pi n^{1/\alpha}/h} |\Phi_n(t)| \le \re^{-n c_\rho / 2},
\end{equation*}
and hence
\begin{equation*}
J_3(n) = \int_{\delta n^{1/\alpha} \le |t| \le \pi n^{1/\alpha}/h} \left| \Phi_n(t) \right| \ud t \le
\frac{2\pi}{h} n^{1/\alpha} \re^{-n c_\rho / 2} \to 0,
\end{equation*}
as $n \to \infty$.

For $J_4(n)$, we follow essentially the same counting argument as that used for $I_4(n)$ in Section~\ref{sec:LCLT-proof}.
Let $t' = t/h$. 
Define
\begin{align*}
\Lambda' (t') := \left\{ n^{-1-1/\alpha}jt' : j \in \{1,2, \ldots, n\} \right\} \text{ and } L'_n(t') := \left\{ n^{-1-1/\alpha}ut' : 1 \le u \le n \right\}
\end{align*}
Let $\kappa := n^{-1-1/\alpha} | t '|$ denote the spacing of the points of $\Lambda' (t')$.
Set $N_n (t') := | \Lambda' (t') \setminus S (\rho) |$, where $S (\rho ) := \cup_{x \in 2\pi \Z} (x -\rho, x+\rho)$.
Since $\pi n^{1/\alpha} \le |t'| \le \pi n^{1+1/\alpha}$, we have
$\frac{\pi}{n} \le \kappa \le \pi$,
which is just the $d=1$ case of~\eqref{nu-bounds}. The counting argument in Section~\ref{sec:LCLT-proof} is based
on the fact that there are $n$ points with spacing satisfying~\eqref{nu-bounds}, so the
 argument goes through unchanged to give $N_n (t') \geq \eps n$, and we get
\[
J_4(n) = \int_{\pi n^{1/\alpha}/h \le |t| \le \pi n^{1+1/\alpha}/h} \left| \Phi_n(t) \right| \ud t \le \frac{2\pi}{h} n^{1+1/\alpha}  \exp\left\{- \eps c_\rho n \right\} \to 0,
\]
as $n \to 0$.

Finally, it is clear that $\lim_{\ell \to \infty} \sup_n J_5 = 0$.
\end{proof}

\begin{proof}[Proof of Theorem~\ref{thm:stable-transience}]
First note that the assumption~\eqref{ass:stable} implies that we are in case (iv) of Lemma~\ref{lem:4case},
so that $\liminf_{n \to \infty} G_n= -\infty$ and $\limsup_{n \to \infty} G_n= +\infty$.

It remains to prove that $|G_n | \to \infty$.
Fix $x \in (0,\infty)$ and consider the interval $I = (-x,x)$.
Then $\Pr ( G_n \in I) = \Pr (n^{-1/\alpha} G_n \in n^{-1/\alpha} I )$.
Since the lattice spacing of $\mathcal{L}_n$ is
of order $n^{-1-1/\alpha}$,
 the interval $n^{-1/\alpha} I$ contains $O ( n)$ lattice points of $\cL_n$.
Theorem~\ref{thm:SLCLT} and the fact that, by~\eqref{eq:stable-g}, $\sup_x g(x) < \infty$,
shows that each such lattice point is associated with probability
$O ( n^{-1-1/\alpha})$. So we get $\Pr (G_n \in I ) = O (n^{-1/\alpha})$,
which is summable for $\alpha \in (0,1)$. Hence the Borel--Cantelli lemma implies that
$\liminf_{n \to \infty} | G_n| \geq x$, a.s., and since $x$ was arbitrary the result follows.
\end{proof}

\section{Transience and rate of escape}
\label{sec:transience}
 
This section is devoted to the proof of Theorem~\ref{thm:classification2} for $d \geq 2$. 
The idea is to use the local limit theorem to control (via Borel--Cantelli) the visits of $G_n$
to a growing ball, along a subsequence of times suitably chosen so that
the slow movement of the centre of mass controls the trajectory between the times
of the subsequence as well. Here is our estimate on the deviations, which is valid for any $d \in \N$.

\begin{lemma}
\label{lem:diff}
Suppose that~\eqref{ass:basicd} holds and that $\bmu = \0$. 
Let $a_n=\lceil n^\beta \rceil$  for some $\beta >1$. Then, for any $\varepsilon>0$, a.s. for all but finitely many $n$,  
\[
\max_{a_n \le m \le a_{n+1}} \|G_m-G_{a_n} \| \le n^{\frac{\beta}{2}-1+\varepsilon}.
\]
\end{lemma}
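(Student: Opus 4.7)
The plan is to split $G_m - G_{a_n}$ into an $\cF_{a_n}$-measurable piece and a piece involving only the post-$a_n$ increments, then bound each using standard moment tools. Expanding \eqref{eq:weighted-sum} and subtracting yields, for any $m \ge a_n$,
\[
G_m - G_{a_n} = \frac{m-a_n}{m\,a_n}\,W_n + \frac{1}{m}\,\tilde Y_{m-a_n},
\]
where $W_n := \sum_{i=1}^{a_n}(i-1)X_i$ is $\cF_{a_n}$-measurable and $\tilde Y_\ell := \sum_{i=1}^\ell(\ell-i+1) X_{a_n+i}$ is built from the independent block $X_{a_n+1},\dots,X_{a_n+\ell}$. (Algebraically, the coefficient of $X_i$ for $i\le a_n$ in $G_m-G_{a_n}$ is $(m-a_n)(i-1)/(m a_n)$.)

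For the first summand the coefficient $1/a_n - 1/m$ is monotone in $m$, so $\max_{m\in[a_n,a_{n+1}]} (m-a_n)/(m a_n) \le (a_{n+1}-a_n)/a_n^2 = O(n^{-\beta-1})$. To control $\|W_n\|$ almost surely I would view the whole sequence $W'_N := \sum_{i=1}^N (i-1)X_i$ as a weighted sum of i.i.d.\ mean-zero random variables with weights $b_i := i-1$ satisfying $B_N := \sum_{i=1}^N b_i^2 \asymp N^3$ and $\max_{i \le N} b_i^2 / B_N = O(N^{-1})$. The law of the iterated logarithm for weighted sums (Teicher, Kuelbs) then gives $\|W'_N\| = O\bigl(\sqrt{B_N\log\log B_N}\bigr) = O(N^{3/2+\eta})$ a.s.\ for every $\eta>0$; specialising to $N = a_n$ yields $\|W_n\| = O(n^{3\beta/2+\eta})$, so the first summand's contribution to $\max_m \|G_m - G_{a_n}\|$ is $O(n^{\beta/2-1+\eta})$.

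For the second summand, write $\tilde S_k := X_{a_n+1}+\cdots+X_{a_n+k}$ so that $\tilde Y_\ell = \sum_{k=1}^\ell \tilde S_k$ and $\|\tilde Y_\ell\| \le \ell\max_{k \le \ell}\|\tilde S_k\|$. With $L := a_{n+1} - a_n \asymp n^{\beta-1}$, Doob's $L^2$ maximal inequality gives $\E\max_{k \le L}\|\tilde S_k\|^2 \le 4L\,\mathrm{tr}(M)$; Markov plus Borel--Cantelli then yield $\max_{k\le L}\|\tilde S_k\| = O(L^{1/2+\eta'})$ a.s.\ for any $\eta' > 1/(2(\beta-1))$, since then $\sum_n L^{-2\eta'} = \sum_n O(n^{-2(\beta-1)\eta'})$ converges. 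Hence the second summand is $O(L^{3/2+\eta'}/a_n) = O(n^{(\beta-3)/2+(\beta-1)\eta'})$, which for $\eta'$ sufficiently close to $1/(2(\beta-1))$ is smaller than $n^{\beta/2-1+\varepsilon}$ (indeed, smaller than $n^{\beta/2-1}$ times a power of $n$ that can be made arbitrarily small).

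Combining the two estimates via the triangle inequality, and choosing $\eta<\varepsilon$ together with $(\beta-1)\eta' \le \tfrac{1}{2}+\varepsilon$, gives the claim. The most delicate ingredient is the LIL for $W'_N$: a bare second-moment Chebyshev bound on $\|W_n\|$ taken along the subsequence $(a_n)$ yields only $\|W_n\| = O(a_n^{3/2 + 1/(2\beta)+o(1)})$, which would cost an unwanted factor of $n^{1/2}$ in the final estimate, so the LIL is essential. Its applicability here is routine because the regularity condition $\max_i b_i^2 = o(B_N/\log\log B_N)$ holds trivially for $b_i = i-1$, and the walk increments have finite variance under~\eqref{ass:basicd}.
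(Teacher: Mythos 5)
Your proof is correct, but it takes a genuinely different and more elaborate route than the paper's. The paper argues elementarily: from the crude a.s.\ bound $\|S_n\| \le n^{1/2+\eps}$ all but finitely often, one gets $\|G_n\| \le \max_{i\le n}\|S_i\| \le n^{1/2+\eps}$, hence the one-step estimate $\|G_{j+1}-G_j\| = \|S_{j+1}-G_j\|/(j+1) \le j^{-1/2+\eps}$, and then simply telescopes this over the $O(n^{\beta-1})$ time steps between $a_n$ and $a_{n+1}$. You instead split $G_m - G_{a_n}$ exactly into an $\cF_{a_n}$-measurable term $\frac{m-a_n}{ma_n}W_n$ and a fresh-increment term $\frac{1}{m}\tilde Y_{m-a_n}$, controlling the first via a weighted-sum LIL and the second via Doob--Markov--Borel--Cantelli; the decomposition is exact, and the exponent bookkeeping (choosing $\eta'$ with $(\beta-1)\eta' \in (\tfrac{1}{2}, \tfrac{1}{2}+\eps]$) checks out. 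Your approach buys a clean separation of the predictable and innovation contributions (useful if one wanted conditional refinements), at the cost of heavier tools. Two small quibbles. First, you overstate the necessity of the LIL: by Abel summation, $W'_N = (N-1)S_N - \sum_{i=1}^{N-1}S_i = (N-1)(S_N - G_{N-1})$, so the same crude bound $\|S_N\|,\|G_{N-1}\| = O(N^{1/2+\eps})$ a.s.\ (which you already need elsewhere, and which the paper uses throughout) yields $\|W'_N\| = O(N^{3/2+\eps})$ a.s.\ at every $N$, with no extra machinery. Second, the exponent in the first summand's contribution should read $n^{\beta/2-1+\beta\eta}$ rather than $n^{\beta/2-1+\eta}$, but since $\eta>0$ is arbitrary this does not affect the conclusion.
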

\begin{proof}
We use the crude bound that for any $\eps>0$,
 $\| S_n \| \leq n^{(1/2)+\eps}$ all but finitely often (f.o.), a.s. From this and the triangle inequality, it follows that
\begin{equation}
\label{upperbound}
\|G_n\| \le \frac{1}{n}  \sum_{i=1}^n \| S_i  \| \le \max_{1 \le i \le n} \|S_i\| \le n^{(1/2)+\eps},
\end{equation}
all but f.o., a.s.
Next, by the triangle inequality again, for any $\eps>0$, a.s., all but f.o.,  
\begin{equation}
\|G_{n+1}-G_n\| = \left\| \frac{S_{n+1}-G_n}{n+1} \right\| \le \frac{\|S_{n+1}\|}{n+1} + \frac{\|G_n\|}{n+1} \le n^{-(1/2)+\eps} .
\label{eq:cal7}
\end{equation}
It follows that for any $\eps>0$, a.s., all but f.o., 
\begin{align*}
\max_{a_n \le m \le a_{n+1}} \|G_m-G_{a_n} \| & = \max_{a_n \leq m \leq a_{n+1}} \left\| \sum_{j=a_n}^{m-1} (G_{j+1} - G_j ) \right\| \\
& \le \left( a_{n+1} - a_n \right) \max_{a_n \le m \le a_{n+1}-1} \|G_{m+1}-G_m\| , \end{align*}
where $a_{n+1} -a_n   \leq (n+1)^\beta -n^\beta +1 = O (n^{\beta -1} )$, and, a.s., all but f.o., by~\eqref{eq:cal7},
\begin{align*}  
\max_{a_n \le m \le a_{n+1}-1} \|G_{m+1}-G_m\| & \leq a_n^{-(1/2)+\eps} = O ( n^{-(\beta/2) + \beta \eps} ) .\end{align*}
 Since $\eps>0$ was arbitrary, the result follows.
\end{proof}
 
Now we are ready to prove Theorem~\ref{thm:classification2}.

\begin{proof}[Proof of Theorem~\ref{thm:classification2}.]
First, given the upper bound in equation~\eqref{upperbound}, we only need to show that for any $\eps>0$, a.s., for all but finitely many $n$, 
\begin{equation}
\label{lowerbound}
\|G_n\| \ge n^{(1/2)-\eps}.
\end{equation}
Let $B(r)$ denote the closed Euclidean ball, centred at the origin, of radius $r>0$.
We show that for any $\gamma \in (0, 1/2)$, $G_n$ will return to the ball $B(n^\gamma)$ only f.o. 
To do this, we show that along a suitable subsequence $a_n = \lceil n^\beta \rceil$, $\beta >1$,
$G_{a_n}$ returns to the ball
$B(2a_n^\gamma)$ only f.o., and Lemma~\ref{lem:diff} controls the trajectory between the instants of the subsequence.

First, we claim that
\begin{equation}
\Pr(G_n \in B(2n^\gamma) ) \le Cn^{d\left(\gamma- \frac{1}{2}\right)}, 
\label{eq:correctorder}
\end{equation}
for sufficiently large $n$ and some constant $C$.
Then 
\[
\sum_{n=1}^{\infty} \Pr(G_{a_n} \in B(2a_n^\gamma) ) \le C \sum_{n=1}^{\infty} n^{\beta d\left(\gamma- \frac{1}{2}\right)}. 
\]
Assuming that 
\begin{equation}
\beta > \frac{2}{d(1-2\gamma)} \label{betacon1}
\end{equation}
this sum converges, so the Borel--Cantelli lemma shows that $G_{a_n} \notin B(2a_n^\gamma)$ for all but finitely many $n$, a.s.
It then follows from Lemma~\ref{lem:diff} that between any $a_n$ and $a_{n+1}$ with $n$ sufficiently large,
the trajectory deviates by at most $n^{(\beta/2)-1+\eps}$. In particular, the trajectory between times $a_n$ and $a_{n+1}$ will not visit $B(a_n^\gamma)$ if we ensure that
$n^{(\beta/2)-1+\eps} < a_n^\gamma$. (See Figure~\ref{fig1}.) The latter condition can be achieved (for sufficiently small choice of $\eps$) if $(\beta/2) -1 < \beta \gamma$,
i.e., $\beta <  (\frac{1}{2} -\gamma )^{-1}$. Combined with~\eqref{betacon1} we see that we must choose $\beta$ such that
\[
\max \left\{ 1, \frac{2}{d(1-2\gamma)} \right\} < \beta < \frac{2}{(1-2\gamma)},
\]
which is possible for any $\gamma \in (0,1/2)$, provided $d \geq 2$.

Consider $n$ such that $a_m \leq n < a_{m+1}$; then we have shown that a.s., for all but finitely many $n$,
\[ \| G_n \| \geq a_m^\gamma  \geq m^{\beta \gamma} 
\geq \left( \frac{m^{\beta \gamma} }{2 (m+1)^{\beta \gamma} } \right) a^\gamma_{m+1}   .\]
In particular, for all $n$ sufficiently large, $\| G_n \| \geq (1/4) n^\gamma$, 
which establishes~\eqref{lowerbound}.

\begin{figure}[!h]
\center
\includegraphics[width=60mm]{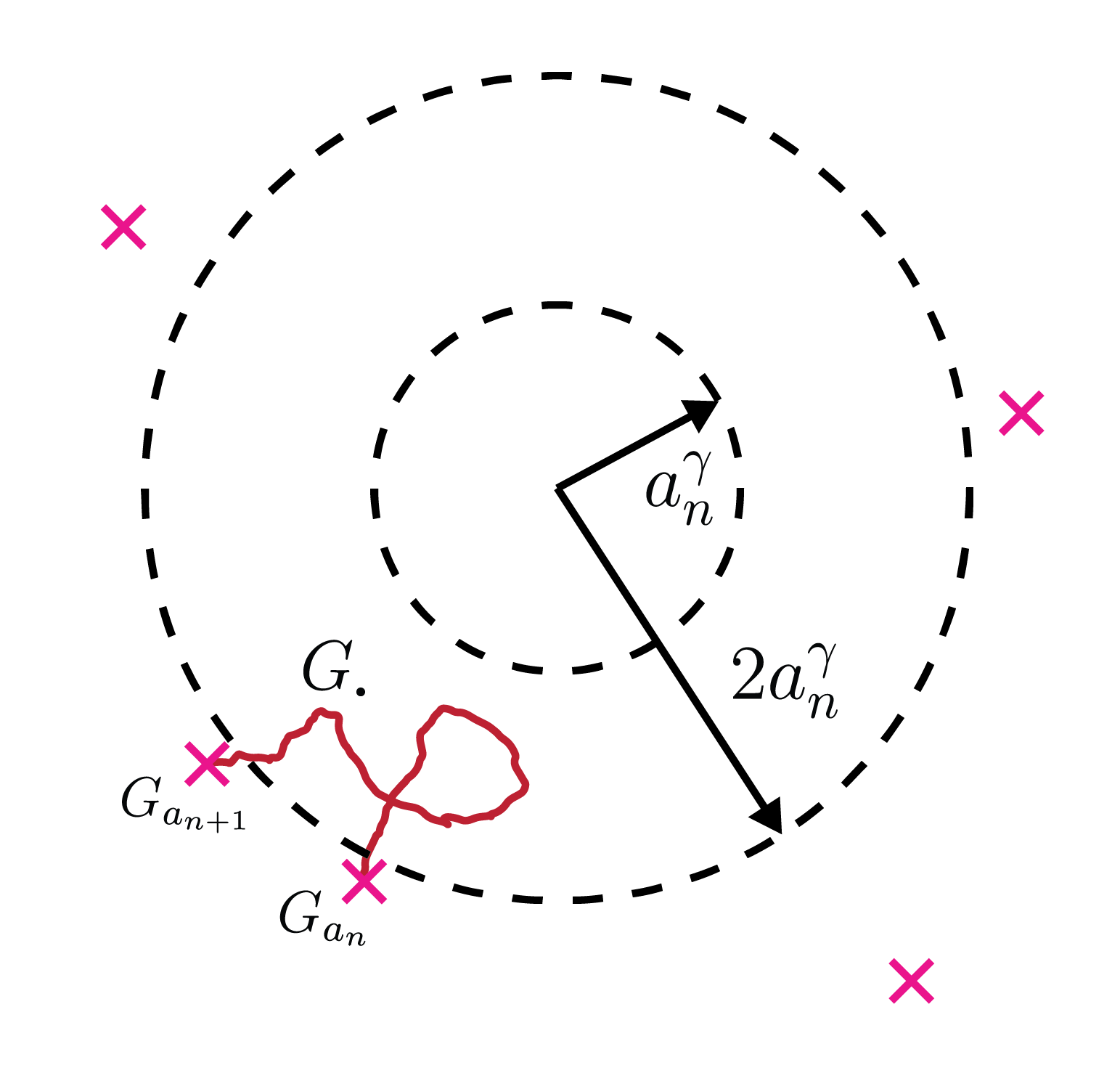}
\caption{Controlling $G_n$ along a subsequence.}
\label{fig1}
\end{figure}

It remains to prove the claim~\eqref{eq:correctorder}; here we use our local limit theorem.
First note that
\[
\Pr(G_{n} \in B(2n^\gamma) )=\Pr(n^{-1/2}G_{n} \in n^{-1/2} B(2n^\gamma) ).
\]
Since $| \det H | \in (0,\infty)$, the set $n H^{-1} B(2n^\gamma)$ 
is contained in a hypercube of side length $O(n^{\gamma+1})$,
and this hypercube contains $O ( n^{d(\gamma+1)} )$ points of any
translation of $\Z^d$. Hence 
$n^{-1/2} B(2n^\gamma)$  contains $O(n^{d(\gamma+1)})$ lattice points of $\cL_n$. 
From Theorem~\ref{thm:LCLT}, we also know that  for all $\bx \in \mathcal{L}_n$, 
$
\Pr(n^{-1/2}G_n = \bx) = O (n^{-3d/2} )
$.
Summing up over all $\bx \in n^{-1/2} B(2n^\gamma)$ we get
\[
\Pr(n^{-1/2} G_{n} \in n^{-1/2}B(2n^\gamma)) = O \left(n^{-3d/2} \times n^{d(\gamma+1)} \right)= O\left(n^{d\left(\gamma- \frac{1}{2}\right)}\right),
\]
establishing~\eqref{eq:correctorder}. This completes the proof.
\end{proof}

\appendix
\section{Lattice distributions and characteristic functions}
\label{sec:char-fn}

Recall that $\varphi (\bt) := \E \re^{i \bt^\tra X}$ is the ch.f.~of $X$.

\begin{lemma}
\label{lem:estchf}
Suppose that $\E [ \| X \|^2 ] < \infty$.
For any $\bt \in \R^d$,
\begin{equation}
\varphi(\bt)=1+i\bt^\tra \E X -\frac{1}{2}\bt^\tra \E[X X^\tra]\bt + \|\bt\|^2 W(\bt), 
\label{eq:estchf}
\end{equation}
where for any $\eps >0$, there exists $\delta >0$ such that $|W(\bt)| \le \eps$ for all $\bt$ with $\|\bt\| \le \delta$.
\end{lemma}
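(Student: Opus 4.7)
The plan is to combine a pointwise Taylor expansion of $e^{iu}$ with the integrability hypothesis via dominated convergence. The key elementary inequality I will invoke is that, for every real $u$,
\[ \left| e^{iu} - 1 - iu + \tfrac{u^2}{2} \right| \leq \min\!\left\{ u^2, \tfrac{|u|^3}{6} \right\}, \]
which follows from Taylor's theorem applied to $e^{iu}$ at orders two and three respectively (using $|e^{ix}|=1$ to bound the remainders).

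Next I will apply this pointwise with $u = \bt^\tra X$. Since $|\bt^\tra X| \leq \|\bt\|\|X\|$, the left-hand side is dominated by $\|\bt\|^2 \|X\|^2$, which is integrable under the assumption $\E[\|X\|^2]<\infty$, so taking expectations is legitimate. Using linearity, $\E[\bt^\tra X] = \bt^\tra \E X$, and $\E[(\bt^\tra X)^2] = \bt^\tra \E[XX^\tra] \bt$, I will obtain
\[ \left| \varphi(\bt) - 1 - i\bt^\tra \E X + \tfrac{1}{2} \bt^\tra \E[XX^\tra] \bt \right| \leq \E\!\left[\min\!\left\{ (\bt^\tra X)^2, \tfrac{1}{6}|\bt^\tra X|^3 \right\}\right], \]
so reading off $W(\bt)$ from~\eqref{eq:estchf} reduces matters to showing that the right-hand side is $o(\|\bt\|^2)$ as $\|\bt\| \to 0$.

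To control $|W(\bt)|$ I will write $\bt = r\theta$ with $r = \|\bt\|$ and $\|\theta\| = 1$, so that
\[ \frac{1}{\|\bt\|^2}\E\!\left[\min\!\left\{ (\bt^\tra X)^2, \tfrac{1}{6}|\bt^\tra X|^3 \right\}\right] = \E\!\left[ (\theta^\tra X)^2 \min\!\left\{1, \tfrac{r}{6}|\theta^\tra X|\right\}\right] \leq \E\!\left[ \|X\|^2 \min\!\left\{1, \tfrac{r}{6}\|X\|\right\}\right]. \]
The final expression is independent of the direction $\theta$, is dominated by the integrable variable $\|X\|^2$, and tends to $0$ as $r \to 0$ by dominated convergence. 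Choosing $\delta$ so small that this expectation is at most $\eps$ then yields the stated bound uniformly over $\|\bt\| \leq \delta$.

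The only mild subtlety is ensuring uniformity in the direction $\theta$ of $\bt$; this is handled cleanly by the crude bound $|\theta^\tra X| \leq \|X\|$, which removes $\theta$ from the dominating expression and lets a single application of dominated convergence finish the argument.
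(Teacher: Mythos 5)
Your proof is correct and follows essentially the same route as the paper's: a second-order Taylor expansion of $e^{iu}$ with the $\min\{u^2,|u|^3/6\}$ remainder bound (the paper cites Durrett's Lemma 3.3.7 for this), applied pointwise at $u=\bt^\tra X$, followed by dominated convergence against $\|X\|^2$. The explicit $\bt=r\theta$ decomposition is a cosmetic rephrasing of the paper's bound $|W(\bt)|\le\E\min\{\|\bt\|\|X\|^3,\|X\|^2\}$, so there is no substantive difference.
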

\begin{proof}
Applying \cite[Lemma 3.3.7]{RD} with $x = \bt^\tra X$, we get that if $\E [ \| X \|^n ] < \infty$, then
\begin{align*}
\left| \E \re^{i\bt^\tra X} - \sum_{m=0}^n \E \frac{(i\bt^\tra X)^m}{m!} \right| &\le \E \left| \re^{i\bt^\tra X} - \sum_{m=0}^n \frac{(i\bt^\tra X)^m}{m!} \right| \\
&\le \E \min \left( \frac{| \bt^\tra X |^{n+1}}{(n+1)!} , \frac{2 | \bt^\tra X |^{n} }{n!} \right).
\end{align*}
Taking $n=2$ and rearranging, we get equation~\eqref{eq:estchf}, and 
$|W(\bt)| \le \E Z(t)$,
where $Z(\bt) = \min \{\|\bt\| \|X \|^3, \|X \|^2 \}$. Now $|Z(\bt)| \le \|X \|^2$ and $\E[ \|X \|^2 ] < \infty$. Also we have $|Z(\bt)| \le \|\bt\| \|X \|^3 \to 0$ a.s.\ as $\|\bt\| \to 0$. So the dominated convergence theorem implies that $\E Z(\bt) \to 0$ as $\|\bt \| \to 0$. 
\end{proof}

We collect some facts about lattice distributions: 
for reference see \cite[Ch.~5]{BR} and \cite[\S 7]{spitzer}. 
Let
\[ \cH := \{  H   : \Pr ( X \in \bb + H \Z^d ) =1 \text{ for some } \bb \in \R^d \} .\]
If $X$ has a lattice distribution, then $\cH$ is nonempty, and if $X$
is non-degenerate then any $H \in \cH$ has $| \det H | >0$. (Here and elsewhere, `non-degenerate' means not supported on any $(d-1)$-dimensional hyperplane.)
Let $K := \{ | \det H | : H \in \cH \}$. The next result gives an upper bound on $h \in K$;
note that this bound
is sharp in both of the examples in Section~\ref{sec:examples}.

\begin{lemma}
\label{lem:h-bounded}
Suppose that $X$ has a non-degenerate lattice distribution. Then $K \subseteq (0,\infty)$ is bounded, and $\inf K = 0$.
\end{lemma}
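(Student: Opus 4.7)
The plan has three ingredients: verifying $K \subseteq (0,\infty)$, establishing an explicit upper bound on $K$ using affine independence in the support of $X$, and exhibiting arbitrarily small elements of $K$ by a scaling construction. I expect the upper bound to be the main conceptual step; the other two are short.

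First I would record that $K \subseteq (0,\infty)$. If $H \in \cH$ were singular, then $H\Z^d$ would be contained in a linear subspace of dimension at most $d-1$, so $\bb + H\Z^d$ would lie in a proper affine hyperplane, contradicting the assumed non-degeneracy of $X$. Hence every $H \in \cH$ has $|\det H| >0$.

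For the upper bound, I would use non-degeneracy to choose affinely independent points $\bx_0, \bx_1, \ldots, \bx_d$ in $\supp X$ (which exist since $\supp X$ is not contained in any hyperplane), and form the $d \times d$ matrix $M$ whose columns are the differences $\bx_i - \bx_0$, $i=1,\ldots,d$; then $\det M \neq 0$ and $M$ depends only on $X$, not on $H$. For any $H \in \cH$ with $\Pr(X \in \bb + H \Z^d)=1$, each of $\bx_0,\ldots,\bx_d$ lies in $\bb + H\Z^d$, hence each $\bx_i - \bx_0$ lies in $H\Z^d$, so there is an integer matrix $N \in \Z^{d\times d}$ with $M = HN$. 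Since $M$ is non-singular, so is $N$, and as $N$ has integer entries, $|\det N| \geq 1$. Thus $|\det H| \leq |\det M|$ for every $H \in \cH$, which gives the uniform upper bound.

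For $\inf K = 0$, I would fix any $H_0 \in \cH$ and, for each integer $k \geq 1$, consider the scaled matrix $H_0 / k$. The inclusion $H_0 \Z^d \subseteq (H_0/k) \Z^d$ is immediate (write $H_0 \bn = (H_0/k)(k\bn)$), so with the same $\bb$ as for $H_0$ we have $\bb + H_0 \Z^d \subseteq \bb + (H_0/k)\Z^d$ and therefore $\Pr(X \in \bb + (H_0/k) \Z^d)=1$. Hence $H_0/k \in \cH$ with $|\det(H_0/k)| = |\det H_0|/k^d$, and letting $k \to \infty$ shows $\inf K = 0$, completing the proof.
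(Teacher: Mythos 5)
Your proof is correct and follows essentially the same route as the paper: the upper bound via affinely independent atoms and the observation that the integer transition matrix $N$ (the paper phrases this geometrically as an integer-vertex parallelepiped having volume $\ge 1$) must have $|\det N| \ge 1$, and $\inf K = 0$ via rescaling $H$ (the paper uses $H/2$, you use $H_0/k$). One cosmetic point: you reuse the symbol $M$ for the difference matrix, which collides with the paper's reserved notation for the covariance matrix (the paper uses $\Lambda$ for this).
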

\begin{proof}
Since $X$ has a non-degenerate lattice distribution, we have that (i) $\cH$ is non-empty and $|\det H| > 0$ for all $H \in \cH$;
and (ii) there exists $\cX := \{ \bx_0, \bx_1, \ldots, \bx_d \}$ such that
$\bx_0, \ldots, \bx_d$ are affinely
independent, and $\Pr ( X = \bx_i ) >0$ for each $i$. 
Statement (i) shows that $K \subseteq (0,\infty)$ is nonempty,
and statement (ii) shows that $K$ is bounded. 
Indeed, for any $H \in \cH$ we have that there exists $\bb$ such that $\cX \subset  \bb + H \Z^d$, i.e.,
$H^{-1} ( \cX - \bb ) \subset  \Z^d$.
For $i \in \{1,\ldots, d\}$ let $\lambda_i = \bx_i - \bx_0$.
Then the linearly independent vectors $\lambda_1, \ldots, \lambda_d$ define a parallelepiped $P$
with volume $| \det \Lambda | \in (0,\infty)$, where $\Lambda$ denotes the $d \times d$ matrix whose columns are $\lambda_1, \ldots, \lambda_d$.
Since $H^{-1} ( \cX - \bb )$ are points of $\Z^d$, we have that all the vertices of the parallelepiped $P':= H^{-1} (\bx_0 + P - \bb )$
are points of $\Z^d$. Now $P'$ has volume $h^{-1} | \det \Lambda | > 0$, but, as a parallelepiped of positive volume
whose vertices are in $\Z^d$, must have volume at least 1. Thus $h^{-1} | \det \Lambda | \geq 1$, i.e., $h \leq | \det \Lambda | < \infty$.
Also, we see that if $H \in \cH$, then $H/2 \in \cH$ as well, so if $h \in K$ then $h/2^d \in K$ too. 
\end{proof}

Define
$U := \{ \bt \in \R^d : | \varphi (\bt ) | = 1 \}$.
Given an invertible $d$ by $d$ matrix $H$, set $S_H := 2 \pi ( H^\tra)^{-1} \Z^d$.
The next result shows that if $H \in \cH$, then $S_H \subseteq U$.

\begin{lemma}
\label{lem:property}
Suppose that $H \in \cH$. Then 
$|\varphi (\bu )| = 1$ for all $\bu \in S_H$.
\end{lemma}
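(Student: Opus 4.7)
The plan is to verify the identity $|\varphi(\bu)|=1$ by exhibiting $\re^{i\bu^\tra X}$ as an (almost surely) deterministic unit-modulus constant whenever $\bu \in S_H$.

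First I would use the hypothesis $H \in \cH$ to fix a vector $\bb \in \R^d$ with $\Pr(X \in \bb + H\Z^d)=1$. This lets me write $X = \bb + H \bm$, a.s., for some $\Z^d$-valued random vector $\bm$. Then I would fix an arbitrary $\bu \in S_H$ and write $\bu = 2\pi (H^\tra)^{-1} \bk$ for some $\bk \in \Z^d$.

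Next I would compute $\bu^\tra X$ on the event of full measure where $X = \bb + H\bm$. The key algebraic step is
\[ \bu^\tra H = 2\pi \bk^\tra (H^\tra)^{-\tra} H = 2\pi \bk^\tra H^{-1} H = 2\pi \bk^\tra, \]
so that $\bu^\tra H \bm = 2\pi \bk^\tra \bm \in 2\pi \Z$ almost surely, because $\bk,\bm \in \Z^d$. Consequently $\re^{i\bu^\tra H \bm} = 1$, a.s., and
\[ \re^{i \bu^\tra X} = \re^{i\bu^\tra \bb}\, \re^{i\bu^\tra H \bm} = \re^{i\bu^\tra \bb}, \text{ a.s.} \]

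Taking expectations gives $\varphi(\bu) = \re^{i\bu^\tra \bb}$, so $|\varphi(\bu)|=1$. Since $\bu \in S_H$ was arbitrary, this completes the argument. There is no real obstacle here; the only point worth flagging is the transpose bookkeeping in the identity $\bu^\tra H = 2\pi \bk^\tra$, which is precisely the reason the definition of $S_H$ uses $(H^\tra)^{-1}$ rather than $H^{-1}$.
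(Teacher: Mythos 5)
Your proof is correct and is essentially the same computation as the paper's: both reduce to the observation that for $\bk \in \Z^d$ and $X = \bb + HW$ with $W \in \Z^d$ a.s., the quantity $\bk^\tra H^{-1}(X-\bb) = \bk^\tra W$ is an a.s.\ integer, so $\re^{2\pi i \bk^\tra H^{-1}X}$ is a deterministic unit-modulus constant. The only difference is that the paper proves the slightly stronger periodicity statement $|\varphi(\bt + 2\pi(H^\tra)^{-1}\bk)| = |\varphi(\bt)|$ for all $\bt$ (which it reuses later, e.g.\ in the proof of Lemma~\ref{lem:equivalent}) and then specialises to $\bt=\0$, whereas you directly target the $\bt=\0$ case.
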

\begin{proof}
First observe that the norm of the characteristic function is invariant under translation by any vector of the form of $2\pi (H^\tra)^{-1} \bk$ with $\bk \in \Z^d$. 
To see this, note that for any $\bk \in \Z^d$, 
\[
\left|\varphi(\bt+2\pi (H^\tra)^{-1}\bk)\right|=\left| \E \left[ \re^{i\bt^\tra X}  \cdot \re^{ 2 \pi i \bk^\tra H^{-1} X} \right] \right|.
\]
Since $H \in \cH$, we may write
 $X =\bb + H W$, where $\bb \in \R^d$ is constant and $W \in \Z^d$. Hence 
\[
\left|\varphi(\bt+2\pi (H^\tra)^{-1}\bk)\right|
=\left|\re^{ 2 \pi i \bk^\tra H^{-1} \bb}\right| \cdot \left| \E \left[ \re^{i\bt^\tra X} \cdot \re^{2 \pi i \bk^\tra W } \right] \right|  ,\]
because $\bk^\tra H^{-1} \bb$ is a non-random scalar.
Then, since $|\exp\{ 2 \pi i \bk^\tra H^{-1} \bb \}|=1$
and $\bk^\tra W \in \Z$, so that 
$\exp\{2 \pi i \bk^\tra W \}=1$, it follows that for any $\bk \in \Z^d$,
\begin{equation}
\label{eq:periodic}
\left|\varphi(\bt+2\pi (H^\tra)^{-1}\bk)\right| = \left|\varphi(\bt)\right|.
\end{equation}
In particular, the case $\bt = \0$ of~\eqref{eq:periodic} 
shows that $|\varphi( \bu ) | = 1$ if $\bu \in S_H$.
\end{proof}

If $\Pr ( X \in \bb + H \Z^d) = 1$ and $\Pr (X = \bx ) >0$, then
$\bx - \bb \in H \Z^d$ so that $\bx + H \Z^d = \bb + H\Z^d$,
and so if $H \in \cH$ then $\Pr (X \in \bx + H \Z^d ) = 1$
for any $\bx$ with $\Pr (X = \bx ) >0$.

Lemma~21.4 of~\cite{BR} shows that there is a unique minimal subgroup $L$ of $\R^d$ such that
$\Pr ( X \in \bx + L ) =1$ for any $\bx$ with $\Pr ( X = \bx ) >0$ and
if $H \in \cH$ then
$L \subseteq H \Z^d$. Moreover, the discrete subgroup $L$
is generated by $\{ \xi : \Pr ( X = \bx + \xi ) > 0 \}$ for any given
$\bx$ with $\Pr ( X = \bx ) > 0$. We have $L= H_0 \Z^d$ for some (not necessarily unique) $H_0 \in \cH$;
let $\cH_0 := \{ H \in \cH : L = H \Z^d \}$.

The next result gives equivalent formulations of the fundamental assumption~\eqref{ass:basicd2}.
For $\rho >0$, define $S_H(\rho) := \cup_{\by \in S_H} B(\by;\rho)$, 
where $B(\by; \rho)$ is the open Euclidean ball of radius $\rho$ centred at $\by \in \R^d$.

\begin{lemma}
\label{lem:equivalent}
Suppose that $X$ is non-degenerate and $H \in \cH$.
The following are equivalent.
\begin{itemize}
\item[(i)] $H \in \cH_0$.
\item[(ii)] $| \det H |$ is the maximal element of $K$.
\item[(iii)] $S_H = U$.
\end{itemize}
Moreover, if any one of these conditions holds then, for any $\rho >0$, there exists a positive constant $c_{\rho}$ such that 
\[
\left| \varphi(\bu) \right| \le \re^{-c_{\rho}}, \text{ for any } \bu \notin S_H(\rho).
\]
\end{lemma}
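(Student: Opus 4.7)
The plan is to establish the three equivalences by proving the cycle (i) $\Leftrightarrow$ (ii) and (i) $\Leftrightarrow$ (iii), and then to deduce the quantitative bound from compactness once (iii) is available. Throughout I shall use the elementary fact that for two full-rank lattices $H_1 \Z^d \subseteq H_2 \Z^d$ in $\R^d$, the matrix $H_2^{-1} H_1$ has integer entries, so the index $|\det(H_2^{-1} H_1)| = |\det H_1|/|\det H_2|$ is a positive integer.

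For (i) $\Leftrightarrow$ (ii), the argument is purely lattice-theoretic. If $H \in \cH_0$, then for any $H' \in \cH$ minimality of $L$ gives $H\Z^d = L \subseteq H'\Z^d$; the index remark then yields $|\det H'| \le |\det H|$, proving (ii). Conversely, fix an arbitrary $H_0 \in \cH_0$: minimality gives $H_0\Z^d \subseteq H\Z^d$, so the positive integer $|\det H_0|/|\det H|$ is at most $1$ under (ii), hence equals $1$, forcing $H\Z^d = H_0\Z^d = L$, i.e.\ $H \in \cH_0$.

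For (i) $\Rightarrow$ (iii), Lemma~\ref{lem:property} already gives $S_H \subseteq U$, so the content is $U \subseteq S_H$. Given $\bu \in U$, write $\varphi(\bu) = \re^{i\alpha}$; since $\E \re^{i(\bu^\tra X - \alpha)} = 1$, we have $\bu^\tra X \in \alpha + 2\pi \Z$ almost surely. Picking $\bx_0$ with $\Pr(X=\bx_0)>0$ and replacing $\alpha$ by $\bu^\tra \bx_0$ (adjusting mod $2\pi$), we conclude that $X$ lies a.s.\ in the closed coset $\bx_0 + H_{\bu}$ of the closed subgroup $H_\bu := \{ \by : \bu^\tra \by \in 2\pi \Z\}$. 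By minimality of $L$, $H\Z^d = L \subseteq H_\bu$, which says exactly that $\bu^\tra H \bk \in 2\pi \Z$ for every $\bk \in \Z^d$, i.e.\ $\bu \in S_H$. For (iii) $\Rightarrow$ (i), I would invoke duality: fix $H_0 \in \cH_0$, so $H_0 \Z^d \subseteq H\Z^d$; a direct transpose-and-invert computation shows this lattice inclusion is equivalent to the dual inclusion $S_H \subseteq S_{H_0}$. Lemma~\ref{lem:property} applied to $H_0$ gives $S_{H_0} \subseteq U$, and combined with (iii) this yields $S_{H_0} \subseteq S_H$, so $S_H = S_{H_0}$; reversing duality gives $H\Z^d = H_0\Z^d = L$.

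For the moreover bound, the key is equation~\eqref{eq:periodic}: $|\varphi|$ is invariant under translation by $S_H$ and continuous, so it descends to a continuous function on the compact torus $\T := \R^d / S_H$. Under (iii), the only point of $\T$ where the descended function equals $1$ is the image of $\0$. The image of the complement of $S_H(\rho)$ in $\T$ is closed, hence compact, and disjoint from $\bar\0$; continuity on this compact set therefore forces its supremum to be some $\re^{-c_\rho}$ with $c_\rho > 0$, and lifting back gives the stated bound. The main obstacle is the duality step in (iii) $\Rightarrow$ (i); although it is just linear algebra, one must carefully verify that lattice inclusion and dual inclusion are genuinely equivalent in both directions, which is needed to promote the equality $S_H = S_{H_0}$ back to an equality of the original lattices.
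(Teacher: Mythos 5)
Your proof is correct, and although it establishes the same equivalences, it does so by a genuinely different decomposition of the cycle and with more self-contained arguments than the paper. The paper proves (i) $\Leftrightarrow$ (ii) via the parallelepiped-volume observation, then (i) $\Rightarrow$ (iii) by appealing to Lemma~21.6 of~\cite{BR}, and closes the loop with (iii) $\Rightarrow$ (ii) through a somewhat intricate contradiction: assuming $h < h_\star := \sup K$, it picks a near-maximal $H_1 \in \cH$ and manufactures $H_2 = H_1 H_1^\tra (H^\tra)^{-1} \in \cH$ with $|\det H_2| = h_1^2/h > h_\star$, contradicting the definition of $h_\star$ (this requires Lemma~\ref{lem:h-bounded} to know $h_\star < \infty$). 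Your route instead proves (i) $\Leftrightarrow$ (ii) cleanly via the lattice-index fact (a tidier phrasing of the same volume bound), proves (i) $\Rightarrow$ (iii) directly from first principles --- showing that $|\varphi(\bu)|=1$ forces $X \in \bx_0 + H_\bu$ a.s.\ for the closed subgroup $H_\bu = \{\by : \bu^\tra\by \in 2\pi\Z\}$, then invoking minimality of $L$ --- thereby avoiding the external reference, and proves (iii) $\Rightarrow$ (i) by the dual-lattice observation that $H_0\Z^d \subseteq H\Z^d$ is equivalent to $S_H \subseteq S_{H_0}$, which combined with $S_{H_0} \subseteq U = S_H$ from Lemma~\ref{lem:property} collapses to $H\Z^d = H_0\Z^d$. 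The duality step buys you a shorter and sharper closure of the cycle without needing the boundedness lemma or the ad hoc matrix $H_2$; the cost is that you carry two one-directional arguments out of (i) rather than the paper's single chain. Your treatment of the quantitative bound (descent to the torus $\R^d/S_H$ plus compactness) is the same argument as the paper's fundamental-domain formulation, merely phrased in quotient-space language.
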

\begin{proof}
Suppose that $H_0 \in \cH_0$ and $H \in \cH$.
Let $h_0 = | \det H_0|$ and $h = | \det H|$.
Then, by minimality, $H_0 \Z^d \subseteq H \Z^d$, i.e., $H^{-1} H_0 \Z^d \subseteq \Z^d$.
Thus $H^{-1} H_0 [0,1]^d$ is a parallelepiped whose vertices are all in $\Z^d$,
and necessarily this parallelepiped has volume at least 1. Hence $h_0 / h \geq 1$, i.e., $h \leq h_0$. 
Thus if $H \in \cH_0$ then $| \det H |$ is maximal. 
On the other hand, suppose $H \in \cH \setminus \cH_0$ and $H_0 \in \cH_0$.
Then $H_0 \Z^d \subset H \Z^d$ are not equal, so there is some $\bx \in H \Z^d$ with $\bx \notin H_0 \Z^d$.
Thus for $\by = H^{-1} \bx \in \Z^d$, we have that $H^{-1} H_0 \Z^d \subset \Z^d$ with $\by \notin H^{-1} H_0 \Z^d$.
For $\bz \in \Z^d$ we have $\by = H^{-1} H_0 ( \bz + \alpha )$ where $\alpha \in [0,1]^d$ is not a vertex;
but then $\by - H^{-1} H_0 \bz \in \Z^d$ as well. Thus $\beta = H^{-1} H_0 \alpha$ is a point of $\Z^d$
contained in the parallelepiped $P = H^{-1} H_0 [0,1]^d$, and moreover all the vertices of $P$ are in $\Z^d$,
and $\beta$ is not a vertex. Hence
the parallelepiped $P$ has volume strictly greater than 1 (see \cite[p.~69]{spitzer}), and so $h_0 / h > 1$.
Thus if $H \notin \cH_0$ then $| \det H |$ is not maximal. 
 Thus (i) and (ii) are equivalent.

We show that (i) implies (iii).
For $H \in \cH$ set
\begin{align*} R_H & := \{ \bt \in \R^d : \bx^\tra \bt \in 2 \pi \Z \text{ for all } \bx \in H \Z^d \}\\
& = \{ \bt \in \R^d : \bz^\tra H^\tra \bt \in 2 \pi \Z \text{ for all } \bz \in \Z^d \} . \end{align*}
It follows that
\[ R_H =  2 \pi ( H^\tra )^{-1} \{ \by \in \R^d : \bz^\tra \by \in \Z \text{ for all } \bz \in \Z^d \} =  
 2 \pi ( H^\tra )^{-1}  \Z^d = S_H .\]
So $R_H = S_H$ for any $H \in \cH$ with $| \det H | >0$. Moreover,
Lemma~21.6 of~\cite{BR} shows that $R_H = U$ if $H \Z^d$ is minimal.
Thus (i) implies (iii).

Next we show that (iii) implies (ii).
Let $h_\star := \sup K$, which, by Lemma~\ref{lem:h-bounded} is finite and positive.
Suppose that $H \in \cH$ with $|\det H| = h \in (0,h_\star)$.
Then for any $\eps >0$ sufficiently small, we can find $H_1 \in \cH$ with $| \det H_1 | = h_1 \in ( h , h_\star]$ such that $h_1 >  ( 1+2 \eps) h$
and $h_1 > (1-\eps) h_\star$. Let $S = 2\pi (H^\tra)^{-1} \Z^d$ and $S_1 = 2\pi (H_1^\tra)^{-1} \Z^d$.

Consider $\bx$ with $\Pr (X = \bx ) >0$. Then there exist $\bb, \bb_1 \in \R^d$ (not depending on $\bx$) and $\bz,\bz_1 \in \Z^d$ (depending on $\bx$)
such that 
\[ \bx = \bb + H \bz  = \bb_1 + H_1 \bz_1 ,\]
and hence 
\begin{equation}
\label{calH1}
\bz = H^{-1} (\bb_1 - \bb) + H^{-1} H_1 \bz_1. 
\end{equation}
Take $\bs = 2\pi(H_1^\tra)^{-1} \bz_1 \in S_1$.
Assume, for the purpose of deriving a contradiction,
 that $S_1 \subseteq S$. Then $\bs \in S$, i.e., there exists $\bz_2 \in \Z^d$ such that
\[ \bs = 2\pi(H_1^\tra)^{-1} \bz_1 = 2\pi(H^\tra)^{-1} \bz_2.\]
Together with~\eqref{calH1}, this implies that 
\[ \bz = H^{-1} H_1 H_1^{\tra} (H^\tra)^{-1} \bz_2 + H^{-1}(\bb_1 - \bb) .\]
It follows that 
\begin{align*}
\bx  = \bb + H \bz  = \bb_1 + H_1 H_1^{\tra} (H^\tra)^{-1} \bz_2.
\end{align*}
Now if we take $\bb_2= \bb_1$ and $H_2 = H_1 H_1^{\tra} (H^\tra)^{-1}$, 
we have shown that every $\bx$ for which $\Pr ( X = \bx ) >0$ has $\bx \in \bb_2 + H_2 \Z^d$, i.e.,
$H_2 \in \cH$. But
\begin{align*}
\left|\det H_2 \right| & = \left|\det H_1 \right| \left|\det H_1^{\tra} \right| \left|\det (H^\tra)^{-1} \right| = \frac{h_1^2}{h} \\
& > (1+2\eps) (1-\eps) h_\star > h_\star ,\end{align*}
for $\eps$ sufficiently small,
which contradicts the definition of $h_\star$. Thus there exists some $\bx \in S_1$ with $\bx \notin S$.

From Lemma~\ref{lem:property}, we have $S_1 \subseteq U$; hence there is some $\bx \in U$ with $\bx \notin S$. In other words,
we have shown that if $h \in (0,h_\star)$ then $S \neq U$. Thus if we assume that $S = U$, the only possibility is $h = h_\star \in K$.
Thus (iii) implies (ii).

To prove the final statement in the lemma, we may suppose that (iii) holds. Then $| \varphi ( \bu ) | < 1$ if $\bu \notin S_H$.
To finish the proof of the lemma, it suffices to show that $\sup_{\bu \notin S_H(\rho)} |\varphi(\bu)| <1$.
But, by the periodicity of $|\varphi ( \bu ) |$ from~\eqref{eq:periodic}, we have
$\sup_{\bu \notin S_H(\rho)} | \varphi (\bu) | = \sup_{\bu \in T_H(\rho)} | \varphi (\bu) |$
where $T_H(\rho) := 2 \pi ( H^\tra )^{-1} [ -\frac{1}{2}, \frac{1}{2} ]^d \setminus B ( \0 ; \rho )$.
Suppose that $\sup_{\bu \in T_H(\rho)} | \varphi (\bu) | = 1$; then by the continuity of $|\varphi(\bu)|$, the supremum is attained at a point $\bu$ in the compact set $T_H(\rho)$, 
contradicting the fact that $|\varphi (\bu)| < 1$ for all $\bu \notin S_H$. Hence $\sup_{\bu \in T_H(\rho)} | \varphi (\bu) | < 1$, and the proof is completed.
\end{proof}

\section{Proofs of Propositions~\ref{prop:LLN} and~\ref{prop:CLT}}
\label{sec:appendix}

\begin{proof}[Proof of Proposition~\ref{prop:LLN}.]
By the strong law for $S_n$, we have that
for any $\eps>0$ there  exists $N_\eps$ with $\Pr (N_\eps < \infty) =1$ such that $\| S_n - n\bmu \| \le n \eps$ for all $n \ge N_\eps$. Then, by the triangle inequality,
\begin{align*}
\left\| G_n - (n+1) (\bmu/2)  \right\| & = \frac{1}{n} \left\|\sum_{i=1}^n ( S_i - i \bmu ) \right\| \\
&\le \frac{1}{n}  \sum_{i=1}^{N_\eps} \| S_i - i \bmu \|  + \frac{1}{n}   \sum_{i=N_\eps}^n \| S_i - i \bmu \|  \\
& \leq \frac{1}{n}  \sum_{i=1}^{N_\eps} \| S_i - i \bmu \|  + \frac{1}{n} \sum_{i=1}^n i \eps .
\end{align*}
It follows that 
\[ \limsup_{n \to \infty} n^{-1} \left\| G_n - (n+1) (\bmu/2)  \right\| \leq \eps /2 ,\]
and since $\eps >0$ was arbitrary we get the result.
\end{proof}

\begin{proof}[Proof of Proposition~\ref{prop:CLT}.]
For any unit vector $\be \in \R^d$, 
$\be \cdot G_n$ is the centre-of-mass associated with the one-dimensional random walk
with increments $\be \cdot X_i$; thus, by the Cramer--Wold device (see e.g~\cite[Theorem~3.9.5]{RD}), it suffices to 
establish the central limit theorem for $d=1$.

So take $d=1$ and write $\bmu = \mu$, $M = \sigma^2 \in (0,\infty)$.
It follows from~\eqref{eq:weighted-sum} that for fixed $n$,
 $G_n$ has the same distribution as
\[ G_n' := \sum_{i=1}^n \left( \frac{i}{n} \right) X_i .\]
It thus suffices to show that $n^{-1/2} ( G_n' - \frac{n}{2} \mu )$ converges in distribution
to $\cN_1 (0, \sigma^2/3)$. We show that this follows from~\cite[Corollary~8.4.1]{AB}.
Define $T_{n,i} := \frac{i}{n^{3/2}} ( X_i - \mu)$, so that
\[ \sum_{i=1}^n T_{n,i} - n^{-1/2} \left( G_n' - \frac{n}{2} \mu \right) \to 0, \as \]
Then
\[ \sum_{i=1}^n \Var (T_{n,i}) = \sum_{i=1}^n \frac{i^2}{n^3} \sigma^2 \to \frac{\sigma^2}{3}. \]
It remains to verify the Lindeberg condition for triangular arrays: for every $\eps>0$,
\[ 
\lim_{n \to \infty} \sum_{i=1}^n \E \left[ T_{n,i}^2 \1 {|T_{n,i}| > \eps }  \right] = 0 .
\] 
But we have that 
\begin{align*}
\sum_{i=1}^n \E \left[T_{n,i}^2 \1{|T_{n,i}| > \eps}  \right] & \le \sum_{i=1}^n \E \left[T_{n,n}^2 \1{|T_{n,n}| > \eps}  \right] \\
&= \sum_{i=1}^n \frac{1}{n} \E \left[(X-\mu)^2 \1{|X -\mu| > \eps\sqrt{n}}  \right] \\
&= \E \left[(X -\mu)^2 \1{|X-\mu| > \eps\sqrt{n}}  \right].
\end{align*}
Now   $(X-\mu)^2 \1{|X -\mu| > \eps \sqrt{n}} \to 0$ a.s.~as $n \to \infty$ and  $|(X -\mu)^2 \1{|X -\mu| > \eps \sqrt{n}}| \le (X-\mu)^2$ which has $\E [ (X-\mu)^2 ] < \infty$. Thus the dominated convergence theorem yields $\E[(X-\mu)^2 \1{|X-\mu| > \eps \sqrt{n}}] \to 0$ as $n \to \infty$ and the Lindeberg condition is verified,
and~\cite[Corollary~8.4.1]{AB} shows that $\sum_{i=1}^n T_{n,i}$ converges in distribution to  $\cN_1 (0, \sigma^2/3)$.
\end{proof}

\section*{Acknowledgements}

The authors are grateful to Ostap Hryniv and Mikhail Menshikov
for fruitful discussions on the topic of this paper, to two anonymous referees
for their comments, and to Francesco Caravenna for bringing references~\cite{CD,DKW} to our attention.

\end{document}